\definecolor{mygreen}{rgb}{0.05,0.6,0.05}
\newtheorem{thm}{\sc Theorem.}[section]
\newtheorem{lem}{\sc Lemma.}[section]
\newtheorem{rem}{\sc Remark.}[section]
\renewcommand{\theequation}{\arabic{section}.\arabic{equation}}
\newenvironment{AMS}%
{{\upshape\bfseries AMS subject classifications. }\ignorespaces}{}
\newenvironment{keywords}{{\upshape\bfseries Key words. }\ignorespaces}{}
\newcommand{\Vh}{V_h}
\newcommand{\ds}{\; {\rm d}s}
\newcommand{\dt}{\; {\rm d}t}
\newcommand{\dS}{\,{\rm d}S}
\newcommand{\ah}{a_h}
\newcommand{\eht}{\tilde{e}_h}
\newcommand{\vr}{\varrho}
\newcommand{\ut}{u^e}
\newcommand{\ft}{f^e}
\def\ba{\begin{align}}
\def\ea{\end{align}}
\newcommand{\nn}{\nonumber}
\newcommand{\eps}{\varepsilon}
\newcommand{\Th}{\mathcal{T}_h}
\newcommand{\Trt}{\widetilde{\mathcal{T}}_h}
\newcommand{\Vr}{\widetilde{V}_h}
\newcommand{\Dr}{D^{\vr}_h}
\def\epsilon{\varepsilon} 
\def\dx{\,{\rm d}x}
\def\hat{\widehat}
\begin{document}
\title{A Practical Phase Field Method for\\ an Elliptic Surface PDE}
\author{John W. Barrett\footnotemark[2] \and 
        Klaus Deckelnick  \footnotemark[3]\ \and 
        Vanessa Styles \footnotemark[4]}

\renewcommand{\thefootnote}{\fnsymbol{footnote}}
\footnotetext[2]{John passed away on 30 June 2019. We dedicate this article to his memory.}
\footnotetext[3] {Institut f{\"u}r Analysis und Numerik,
 Otto-von-Guericke-Universit{\"a}t Magdeburg, 39106 
Magdeburg, Germany}
\footnotetext[4]{Department of Mathematics, University of Sussex, Brighton, BN1 9RF, UK}

\date{}

\maketitle

\begin{abstract}
We consider a diffuse interface approach for solving an elliptic PDE on a given closed hypersurface.  The method is based on
a (bulk) finite element scheme employing numerical quadrature for  the phase field function and hence is very easy to implement compared to other approaches.  We estimate
the error in natural norms in terms of the spatial grid size, the interface width and the order of the underlying quadrature rule. Numerical test calculations are presented which
confirm the form of the error bounds.
\end{abstract} 

\begin{keywords} 
elliptic surface PDE, diffuse interface, finite element method, error analysis
\end{keywords}

\begin{AMS} 35R01, 65M60, 65M15 
\end{AMS}
\renewcommand{\thefootnote}{\arabic{footnote}}

\section{Introduction} \label{sec:1}

Let $\Gamma \subset \mathbb{R}^{n+1} \, (n=1,2)$ be a closed hypersurface. In this paper we are
concerned with a phase field approach for the numerical solution of the PDE
\ba
-\Delta_{\Gamma} u + u &= f \qquad \mbox{on } \Gamma
\label{P}
\end{align} 
and more general elliptic PDEs on surfaces.
Here, $\Delta_{\Gamma}$ denotes the Laplace--Beltrami operator and $f$ is a given function
on $\Gamma$. Apart from being of interest in their own right, 
elliptic surface PDEs may arise as subproblems in
the time discretization of parabolic surface PDEs as well as in systems involving a coupling to a bulk PDE (see e.g. \cite{ER13}). \\
A major issue in the design and analysis of numerical methods for (\ref{P}) lies
in the fact that the simultaneous approximation of the PDE and of the surface $\Gamma$ is
required. Let us briefly review the various computational approaches that have been 
suggested in the literature. Further references can be found in the nice review articles \cite{DzE13}  and \cite{BDN}. \\
In his seminal paper \cite{Dz88}, Dziuk proposes and analyzes
a method that employs continuous, piecewise linear finite elements on a regular simplicial partitioning
of $\Gamma_h$, a polyhedral approximation of $\Gamma$.   
This approach has been extended to higher order FEM spaces and higher order
polynomial approximations of $\Gamma$ by Demlow in \cite{De09}, while an adaptive version 
of the method can be found  
in \cite{DD07}. However, the construction of a regular
polynomial approximation may be difficult in practice, in particular if the surface is given 
implicitly in terms of a  level set  function. The trace finite element method, proposed by
Olshanskii, Reusken and Grande in \cite{ORG09}, is based on a background mesh which induces an unfitted
approximation $\Gamma_h$ of $\Gamma$ and employs traces of bulk finite element functions. 
Even though $\Gamma_h$ is in general not regular, optimal error estimates 
for piecewise linear
finite elements are obtained. Further developments and variants of this trace method (also called cut finite element method)
can be found in \cite{OR10}, \cite[Section 3]{DER14}, \cite{R15}, \cite{GLR}, \cite{DO12}, \cite{BHL15} and \cite{BHLMZ}. \\  
In the case of a level set representation of $\Gamma$ there is a class of methods that is based on
extending the PDE ({\ref{P}) to an
open neighborhood of $\Gamma$. Using earlier ideas of \cite{BCOS}, 
Burger  considers in \cite[Section 2]{Bu09} an extension with the property that (\ref{P}) is satisfied
simultaneously on all neighboring level surfaces. This approach  gives rise to a weakly
elliptic bulk PDE,  which is  degenerate in the direction normal to the level surfaces 
and which can be solved numerically with the help
of standard bulk
finite elements. Error estimates have been derived in \cite[Theorem 6]{Bu09}, while
\cite{DDEH} considers the problem in a narrow band of width $h$ around $\Gamma$ and 
provides an $O(h)$ bound in $H^1(\Gamma)$.
In both cases the corresponding error analysis  is complicated
by the degeneracy of the extended PDE; an extended PDE, which is uniformly elliptic, has been proposed in  
\cite{CO13} and \cite{OS16} and involves the mean curvature of $\Gamma$.  A different method which leads to a
uniformly elliptic bulk PDE, is obtained by considering the equation which is satisfied by a natural extension
of the solution of the surface PDE. If $\Gamma$ is given implicitly in terms of the signed distance function
this extension is the function which is constant in normal direction, and one is led to the closest point method,
see \cite{MR} for the parabolic case. In the case of a general level set function the corresponding PDE has
been derived in \cite{DER14}, where unfitted sharp and narrow band  finite element methods have been proposed and
analyzed. \\
Note that for schemes that are based on an implicit representation of the types described 
above the discrete surface or the boundary of a narrow
band may cut arbitrarly through a bulk element. Locating these cuts and integrating over 
the discrete surface or partial elements is in general cumbersome. 
A way to circumvent these difficulties is offered by the use of a diffuse interface method. The 
starting point of this approach is again an extension of the surface PDE 
to a neighborhood of $\Gamma$, which is then localized to a thin layer of width proportional to 
$\epsilon$ 
with the help of a phase field function. The resulting problem can be
solved using finite elements, where the geometry is now resolved by evaluating the phase field 
function. This approach was suggested and analyzed in \cite[Section 3]{Bu09} in the elliptic case, and in
\cite{RV} for a linear diffusion equation
for a phase field function with nonlocal support. In \cite{ES09}, \cite{ESSW11} and \cite{DS} a phase field
function with compact support was used in the approximation of an advection diffusion equation on a
moving surface.
In practice, numerical integration needs
to be used which now becomes an issue as estimates for the resulting error  require derivatives 
of the phase field function, which scale with $\epsilon^{-1}$.  Our main contribution in this paper is 
a new, fully practical phase field method to solve (\ref{P})  together with a corresponding error analysis in
natural norms. Furthermore we shall present test calculations for hypersurfaces in two and three dimensions
which confirm the form of our error bounds.

\section{Preliminaries} \label{sec:2}
\setcounter{equation}{0}

\subsection{Notation and problem formulation}
Let $\Gamma \subset \mathbb{R}^{n+1} \, (n=1,2)$ be a  smooth, connected, compact and 
orientable hypersurface without boundary. In view of the Jordan-Brouwer separation theorem,
$\Gamma$ divides $\mathbb{R}^{n+1}$ into an interior and an exterior domain and we denote by $d$ the
signed distance function to $\Gamma$ oriented in such a way that $d<0$ in the interior, $d>0$ in the
exterior of $\Gamma$. It is well--known (see \cite{GT}, Section 14.6) that there exists an open 
neighbourhood $\Omega$ of $\Gamma$ such that $d$ is smooth in $\Omega$ with  $| \nabla d(x) |=1, x \in \Omega$ as well as $\nabla d(x)= \nu(x), x \in \Gamma$, where
$\nu(x)$ is the unit outer normal to $\Gamma$. 
Furthermore, the function $\hat p(x):= x - d(x) \, \nabla d(x)$ assigns to every $x \in \Omega$ the closest point on $\Gamma$, so that
\begin{equation}  \label{op}
\hat{p}(x) \in \Gamma, \qquad  x-\hat{p}(x) \perp T_{\hat{p}(x)} \Gamma \qquad \forall \ x \in \Omega,
\end{equation}
where  $T_p \Gamma$ denotes the tangent space at $p \in \Gamma$.
Note that $\nabla d(x) = \nabla d(\hat p(x)), x \in \Omega$.
For a differentiable function $\eta: \Gamma \rightarrow \mathbb{R}$ let
$\nabla_{\Gamma} \eta(x) = (\underline{D}_1 \eta(x),\ldots,\underline{D}_{n+1} \eta(x)) 
\in T_x \Gamma$ be its tangential gradient. We have that
\begin{equation} \label{tanggrad}
\nabla_{\Gamma} \eta(x)= \nabla \bar{\eta}(x) - (\nabla \bar{\eta}(x) \cdot \nu(x))\, \nu(x) 
\qquad \forall \ x \in \Gamma,
\end{equation}
where $\bar{\eta}$ is an extension of $\eta$ to an open neighborhood of $\Gamma$.  \\[2mm]
Let us consider the following elliptic PDE in divergence form
\begin{equation}  \label{GP}
- \sum_{i,j=1}^{n+1} \underline{D}_i \left( a_{ij} \,\underline{D}_j u \right) + a_0 \,u =f 
\qquad \mbox{ on } \Gamma.
\end{equation}
We assume that $a_{ij} \in C^2(\Gamma)$ and  that 
$A(x)=(a_{ij}(x))_{i,j=1}^{n+1}$ defines a symmetric, uniformly positive definite linear
map from $T_x \Gamma$ into itself, so that there exists $\alpha>0$ with
\begin{subequations}
\begin{alignat}{2} 
\sum_{i,j=1}^{n+1} a_{ij}(x)\, \xi_i \, \nu_j(x) = 0 \qquad &&  \forall \  \xi \in T_x \Gamma, 
\ x \in \Gamma;  \label{comp} \\
\sum_{i,j=1}^{n+1} a_{ij}(x) \,\xi_i \,\xi_j \geq \alpha \,| \xi |^2 \qquad && \forall \ \xi \in 
T_x \Gamma, \ x \in \Gamma. \label{ellipt}
\end{alignat}
\end{subequations}
Since $A(x)\, \nu(x)$ is not relevant for (\ref{GP}), we may assume that $A(x) \,\nu(x) = \nu(x)$, so that
\begin{equation} \label{ell1}
\sum_{i,j=1}^{n+1} a_{ij}(x) \,\xi_i\, \xi_j \geq \min\{\alpha,1\} \, | \xi |^2 \qquad  \forall  
\ \xi \in \mathbb{R}^{n+1}, \ x \in \Gamma. 
\end{equation}
Furthermore, we suppose that $a_0$ and $f$ belong to $W^{1,\infty}(\Gamma)$  and that there exists 
$\alpha_0>0$ such that
\begin{equation}  \label{a0}
a_0(x) \geq \alpha_0 \qquad \forall \ x \in \Gamma.
\end{equation}
It follows from the Lax--Milgram lemma that for every $f \in L^2(\Gamma)$ 
the PDE  (\ref{GP}) has a unique weak solution $u \in H^1(\Gamma)$ in the sense that
\begin{equation}
\sum_{i,j=1}^{n+1} \int_{\Gamma} a_{ij}\, \underline{D}_j u \, \underline{D}_i v \dS_p + 
\int_{\Gamma} 
a_0\, u \, v \dS_p= \int_{\Gamma}  f \, v \dS_p\qquad \forall\ v \in H^1(\Gamma),
\label{WFG}
\end{equation}
where $\dS_p$ is the surface element of $\Gamma$. 
Furthermore, standard regularity theory implies that $u \in H^2(\Gamma)$ and
\begin{equation} \label{apriori}
\Vert u \Vert_{H^2(\Gamma)} \leq C\, \Vert f \Vert_{L^2(\Gamma)}.
\end{equation}
In what follows we suppose that $\Gamma$ is represented in implicit form, i.e. there 
exists a smooth function $\phi: \bar \Omega \rightarrow \mathbb{R}$ such that
\begin{equation} \label{levset}
\Gamma = \left \{ x \in \Omega : \phi(x) = 0 \right \} \qquad \mbox{and} \qquad 
\nabla \phi(x) \neq 0 \quad \forall \ x \in \Gamma.
\end{equation}
By choosing $\Omega$ smaller if necessary  we may assume the
existence of $c_1 \geq c_0 >0 $ such that
\begin{equation}  \label{gradbounds}
\displaystyle 
c_0 \leq | \nabla \phi(x) | \leq c_1 \quad \forall \ x \in \overline{\Omega}.
\end{equation}

\subsection{Extension} \label{exten}
As already mentioned in the introduction our numerical approach is based on extending surface 
quantities and  the surface PDE to a neighborhood of $\Gamma$. In what follows we abbreviate 
\begin{displaymath}
U_r:= \left \{ x \in \Omega :  | \phi(x) | < r \right \}.
\end{displaymath}
A common way to extend a given function $g: \Gamma \rightarrow \mathbb{R}$ consists in setting 
$\hat{g}(x):= g(\hat{p}(x))$, often called the closest-point extension, and  we  shall use $\hat{p}$ in order to extend the data $a_{ij},a_0$ and $f$ to
a neighbourhood of $\Gamma$. However, in order to derive our scheme and in order to carry out the
error analysis we require a further extension which is better adapted to the level set function $\phi$ and
the diffusion matrix $A$, see in particular the relation (\ref{orth}) below.
In what follows we generalize  ideas from  
\cite[Section 2.1]{DER14}. Consider
for $p \in \Gamma$ the parameter-dependent system of ODEs
\begin{equation}  \label{odesys}
\gamma_p'(s) = \frac{A(p)\,\nabla \phi(\gamma_p(s))}{A(p) \,\nabla \phi(\gamma_p(s)) 
\cdot \nabla \phi(\gamma_p(s))}, \qquad \gamma_p(0)=p.
\end{equation}
It is not difficult to see that there is $\delta>0$ such that the solution 
$\gamma_p$ of (\ref{odesys}) exists
uniquely on $(-\delta,\delta)$ for every $p \in \Gamma$, so that we may define the mapping
$F:\Gamma \times (-\delta,\delta)
\rightarrow \mathbb{R}^{n+1}$ by $F(p,s):=\gamma_p(s)$. Recalling that $a_{ij} \in C^2(\Gamma)$ we infer with the help of well--known results on the differentiability of solutions of ODEs with respect to
parameters and initial conditions that $F \in C^2(\Gamma \times (-\delta,\delta);\mathbb{R}^{n+1})$. Furthermore,  (\ref{odesys}) implies that
\begin{displaymath}
\frac{\rm d}{{\rm d}s} \phi(\gamma_p(s)) = \nabla \phi(\gamma_p(s)) \cdot \gamma_p'(s) = 1,
\end{displaymath}
which yields $\phi(\gamma_p(s))=s$ for $|s| < \delta, \ p \in \Gamma$, 
since $\phi(\gamma_p(0))=\phi(p)=0$.  
In particular, $F$ is a bijection from  $\Gamma \times (-\delta,\delta)$ onto $U_{\delta}$ with inverse
\begin{align}
F^{-1}(x)&=(p(x),\phi(x))\qquad \forall \ x \in U_{\delta},
\label{Finv}
\end{align}
where  $p \in C^2(U_{\delta};\mathbb{R}^{n+1})$ satisfies 
\begin{equation}  \label{pprop}
p(x) \in \Gamma \mbox{ for } x \in U_{\delta} \quad \mbox{ and } \quad p(x)=x \mbox{ for }x \in \Gamma.
\end{equation}
It is not difficult to verify that $p(x)=\hat{p}(x)$ in the case $A=I$ and $\phi=d$. \\
Using $p$ we may
define an alternative extension for a given 
$u:\Gamma \rightarrow \mathbb{R}$ to $U_{\delta}$ by setting
\begin{equation} \label{extension}
u^e(x):= u(p(x)) \qquad \forall \ x \in U_{\delta}.
\end{equation}
It is easily seen that $p(\gamma_p(s))=p$, $p \in \Gamma$, so 
that $s \mapsto u^e(\gamma_p(s))$ is constant on $(-\delta,\delta)$. Differentiation with
respect to $s$, together with (\ref{odesys}), then implies that
\begin{equation} \label{orth}
\nabla u^e(x) \cdot A(p(x)) \,\nabla \phi(x) =0 \qquad \forall \ x \in U_{\delta}.
\end{equation}
Suppose in addition that $u$ is a solution of the surface PDE (\ref{GP}). It is shown  in Lemma \ref{extde} of the Appendix that 
$u^e$ then satisfies  the uniformly elliptic PDE
\begin{equation} \label{EP}
\displaystyle 
- \frac{1}{| \nabla \phi | } \,\nabla \cdot \left( A^e \, \nabla \ut \,  | \nabla \phi |  
 \right)  + a^e_0 \, \ut = \ft + \phi \,R \qquad \mbox{in } U_{\delta}, 
\end{equation} 
with $ A^e(x) := A(p(x)), \, a^e_0(x):= a_0(p(x)), \,  f^e(x):= f(p(x))$ and
\begin{equation}  \label{rest}
R(x)  = \sum_{1 \leq | \kappa | \leq 2} \bigl( b_\kappa(x) + \phi(x) c_\kappa(x) \bigr) D^{\kappa}_{\Gamma} u(p(x)), 
\end{equation}
where $b_\kappa \in C^1(U_{\delta}), c_\kappa \in C^0(U_{\delta})$.

\subsection{Phase field approach and finite element approximation}

Let us next derive a suitable localized weak formulation of (\ref{EP}),  
which we shall use later in order to formulate our numerical scheme. Let $\sigma \in C^0(\mathbb{R})$ be such
that $\sigma(r)>0, |r| < \bar r$ and $\sigma(r)=0, |r| \geq \bar r $. A concrete choice of $\sigma$ will be made later.
For  $\eps \in (0,\textstyle\frac{\delta}{\bar r})$  we define 
the phase field function
\ba
\vr(x) := \sigma\left( \frac{\phi(x)}{\eps}\right)
\qquad \forall \ x \in \Omega.
\label{vr}
\end{align}
The restriction on $\eps$ ensures that supp$(\vr) = \overline{U_{\eps \bar r}} \subset U_\delta$.
For a function $v \in L^1(\Omega)$ we obtain with the help of the coarea formula
\begin{equation}  \label{coarea}
\int_{\Omega} v(x) \,  \vr(x) \, | \nabla \phi(x) | \dx = 
\int_{-\eps \bar r }^{\eps \bar r } 
\sigma\left(\frac{t}{\eps}\right) \int_{\lbrace \phi=t \rbrace} v(x) \dS \dt \approx 
\hat{c}\, \,\eps \int_{\lbrace \phi=0 \rbrace} v(x) \dS_p
\end{equation}
for small $\eps>0$, where $\hat{c} = \int_{-\bar r}^{\bar r} \sigma(s) \ds$. 
It is therefore reasonable to approximate the 
surface integral $\int_{\Gamma} v(x) \dS_p$
by the volume integral $(\hat{c}\,\, \eps)^{-1} \int_{\Omega} v(x) \,  
\vr(x) \, | \nabla \phi(x) | \dx$. The latter expression explains the scaling factor $\eps^{-1}$ and
the weight $\vr \, | \nabla \phi |$, which will frequently occur. \\[2mm]
Let us now multiply  (\ref{EP}) by $v\,  \vr \, | \nabla \phi |$ with $v \in H^1(U_r)$ for some $0<r<\delta$
and integrate over $U_r$.  For the leading term
we obtain with the help of integration by parts
\begin{displaymath}
- \int_{U_r} \nabla \cdot \left( A^e \, \nabla \ut \,  | \nabla \phi |   \right) v \, \vr  \dx 
= \int_{U_r} A^e \, \nabla \ut \cdot \nabla v \, \vr \, | \nabla \phi | \dx,
\end{displaymath}
where we have used (\ref{orth}) to see that $A^e \, \nabla \ut \cdot \nabla \vr = 0$. For the same reason the boundary term vanishes as the
unit outer normal to $\partial U_r$ is a multiple of $\nabla \phi$. Thus, we obtain that
\begin{equation} \label{WF}
\int_{U_r}  [ A^e \, \nabla \ut \cdot \nabla v + a^e_0\, \ut \, v ] \,  \vr \,  | \nabla \phi | \dx = \int_{U_r} 
[ \ft + \phi \, R] \, v \, \vr \, | \nabla \phi | \dx \quad \forall v \in H^1(U_r).
\end{equation}
We now use this relation in order to introduce our numerical scheme. To do so, let us assume for simplicity that $\Omega$ is polyhedral and denote by
$\Th$  a regular partitioning of $\Omega$ into simplices 
$T$, i.e.
\ba
\overline{\Omega} = \bigcup_{T \in \Th} T.
\label{Th}
\end{align}
We set $h_T := {\rm diam}(T)$,  $h:=\max_{T \in \Th} h_T$ and  let 
\ba
\Vh := \left\{ \phi_h \in C(\overline{\Omega}) : \phi_h \mbox{ is affine on } T \mbox{ for all } T \in \Th \right\} 
\subset H^1(\Omega).
\label{Vh}
\end{align}
We denote by $I_h : C(\overline{\Omega})
\rightarrow \Vh$ the Lagrange interpolation operator. Note for 
$q > \frac{n+1}{m}$, $m=1$ or $2$ and $\ell=0$ or $1$ that
\ba
|(I-I_h)v|_{\ell,q,T} &\leq C\,h_{T}^{m-\ell}\,|v|_{m,q,T}
\qquad \forall \ v \in W^{m,q}(T), 
\qquad \forall \ T \in \Th.
\label{inter}
\end{align}
In particular we infer from (\ref{gradbounds})  
that there exists an $h_0 >0$
such that for all $h \in (0,h_0]$ 
\begin{align}
\frac{c_0}{2} \leq |\nabla I_h \phi(x)| \leq  2\,c_1\qquad \forall \ x \in \overline{\Omega}. 
\label{Ihgradbelow}
\end{align}
Next, let $\hat{T}$ be the unit simplex in $\mathbb{R}^{n+1}$ and 
\begin{displaymath}
\hat{Q}(\hat g)= | \hat{T} \,  | \sum_{i=1}^L \omega_i \,  \hat g(\hat{b_i}), \qquad \omega_i>0, \, \hat{b}_i \in \hat T, \, i=1,\ldots,L
\end{displaymath}
a quadrature rule which is exact for all polynomials of degree $\leq q$. This gives rise to a quadrature rule on $T$ via
\begin{displaymath}
Q_T(g)= |T| \,  \sum_{i=1}^L \omega_i \,  g(b_{i,T}),
\end{displaymath}
where $b_{i,T}= \Phi_T(\hat{b_i}) \in T$ and $\Phi_T$ is the usual affine transformation from $\hat{T}$ onto $T$. Using a
standard application of the Bramble--Hilbert lemma we obtain for the quadrature error $E_T(g):= Q_T(g) - \int_T g(x) \dx$ that
\begin{equation}  \label{qerror}
| E_T(g) | \leq C \, | T| \,  h_T^{q+1} | g |_{q+1,\infty,T}, \qquad g \in W^{q+1,\infty}(T).
\end{equation}
The degree of exactness of the quadrature formula now enters our choice of 
profile function $\sigma$, which we define as
\ba
\sigma(r) := \left\{\begin{array}{ll}
\cos^{2(q+1)}(r) \qquad & |r| \leq \frac{\pi}{2}, \\
0 & |r| > \frac{\pi}{2}.
\end{array}
\right.
\label{sigma}
\end{align}
A straightforward calculation shows that the corresponding phase field function $\vr(x)=\sigma(\frac{\phi(x)}{\eps})$ satisfies for  $0<| \alpha| \leq q+1$ and $x \in \overline{U_{\epsilon \frac{\pi}{2}}}$
\begin{equation}  \label{vrd}
| D^\alpha \vr(x)|  \leq    C \sum_{k=1}^{| \alpha |} \eps^{-k} | \sigma^{(k)} \bigl( \frac{\phi(x)}{\eps} \bigr)  |
\leq C \sum_{k=1}^{| \alpha |} \eps^{-k} \cos^{2(q+1)-k} \bigl( \frac{\phi(x)}{\eps} \bigr) \leq   C \eps^{-| \alpha |} \vr(x)^{\frac{2(q+1)- | \alpha|}{2(q+1)}}. 
\end{equation}
In order to set up our numerical scheme we define for $h<\eps$ 
\begin{equation}
\Trt:= \left \{ T \in \Th :  | \phi(b_{i,T}) | \leq \eps \arccos \bigl(\frac{h}{\eps} \bigr) \mbox{ for all } i=1,\ldots,L  \right\},
\label{Tht}
\end{equation}
giving rise to the computational domain 
\begin{displaymath}
D_h  := \bigcup_{T \in \Trt} T.
\end{displaymath}

\begin{lem} Denote by $r_0 \in (0,1)$ the unique zero of the function $r \mapsto \arccos(r) - c_1 r$ and set $\hat \eps = \epsilon \arccos(\frac{h}{\eps}) - c_1 h, c_2 = \frac{\pi}{2} + c_1$ ($c_1$ as in (\ref{gradbounds})).  
Suppose that $\eps =\gamma h$ for some $\gamma > \frac{1}{r_0}$ and that $c_2 \epsilon < \delta$. Then we have $\hat \epsilon>0$ and
\begin{equation} \label{inclusions}
\Gamma \subset U_{\hat \eps}  \subset  D_h \subset U_{c_2 \eps} \subset U_{\delta}.
\end{equation}
 Furthermore,
\begin{equation} \label{rholow}
\vr(x)   \leq  C \bigl( \frac{h}{\eps} \bigr)^{2(q+1)}, \qquad x \in D_h \setminus U_{\hat \eps}. 
\end{equation}
\end{lem}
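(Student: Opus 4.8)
The plan is to establish the four inclusions in (\ref{inclusions}) in turn, and then to deduce the pointwise bound (\ref{rholow}) from the definition of $D_h$ together with the derivative estimate (\ref{vrd}). First I would observe that $r_0$ is well defined: the function $r\mapsto \arccos(r)-c_1 r$ is continuous and strictly decreasing on $[0,1]$, takes the value $\frac{\pi}{2}>0$ at $r=0$ and the value $-c_1<0$ at $r=1$, so it has a unique zero $r_0\in(0,1)$. Under the hypothesis $\eps=\gamma h$ with $\gamma>\frac{1}{r_0}$ we have $\frac{h}{\eps}=\frac{1}{\gamma}<r_0$, hence $\arccos(\frac{h}{\eps})-c_1\frac{h}{\eps}>0$, which multiplied by $\eps$ gives $\hat\eps=\eps\arccos(\frac{h}{\eps})-c_1 h>0$. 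The inclusion $\Gamma\subset U_{\hat\eps}$ is then immediate since $\phi\equiv 0$ on $\Gamma$ and $\hat\eps>0$. The inclusion $U_{c_2\eps}\subset U_\delta$ follows directly from the assumption $c_2\eps<\delta$.

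The two middle inclusions are where the geometry enters, and I expect $U_{\hat\eps}\subset D_h$ to be the main point requiring care. Fix $x\in U_{\hat\eps}$ and let $T\in\Th$ be a simplex containing $x$; I must show $T\in\Trt$, i.e. $|\phi(b_{i,T})|\le \eps\arccos(\frac{h}{\eps})$ for every quadrature node $b_{i,T}\in T$. Since $x,b_{i,T}\in T$ and ${\rm diam}(T)\le h$, the mean value theorem together with the gradient bound $|\nabla\phi|\le c_1$ on $\overline\Omega$ from (\ref{gradbounds}) gives $|\phi(b_{i,T})|\le |\phi(x)|+c_1 h<\hat\eps+c_1 h=\eps\arccos(\frac{h}{\eps})$, as required. (Here one must check that $T\subset\overline\Omega$ is where $\phi$ is controlled; this holds for $h$ small enough, or can be folded into the hypothesis $c_2\eps<\delta$ via the resulting inclusion $D_h\subset U_{c_2\eps}$.) Conversely, for $D_h\subset U_{c_2\eps}$, take $x\in T$ with $T\in\Trt$ and any node $b_{i,T}$; then $|\phi(b_{i,T})|\le\eps\arccos(\frac{h}{\eps})\le\eps\frac{\pi}{2}$, and again by the mean value theorem and $|\nabla\phi|\le c_1$, $|\phi(x)|\le|\phi(b_{i,T})|+c_1 h\le\eps\frac{\pi}{2}+c_1 h\le(\frac{\pi}{2}+c_1)\eps=c_2\eps$, using $h\le\eps$. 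This chains the four inclusions together.

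For the estimate (\ref{rholow}), let $x\in D_h\setminus U_{\hat\eps}$, so $\hat\eps\le|\phi(x)|$. Since $x\in D_h$ and hence $x\in T$ for some $T\in\Trt$, the argument above (mean value theorem, $|\nabla\phi|\le c_1$) gives a quadrature node $b_{i,T}$ with $|\phi(b_{i,T})|\ge|\phi(x)|-c_1 h\ge\hat\eps-c_1 h$; combined with $|\phi(b_{i,T})|\le\eps\arccos(\frac{h}{\eps})$ this is a little awkward, so instead I argue directly from the definition of $\sigma$ and $\vr$. We have $\hat\eps\le|\phi(x)|$, and since $x\in D_h\subset U_{c_2\eps}\subset U_{\eps\frac{\pi}{2}}$ for $\eps$ small (as $c_2\eps<\delta$ forces $\frac{\phi(x)}{\eps}$ into $[-\frac{\pi}{2},\frac{\pi}{2}]$ on the support), we may write, using monotonicity of $\cos^{2(q+1)}$ on $[0,\frac{\pi}{2}]$,
\[
\vr(x)=\cos^{2(q+1)}\!\Bigl(\tfrac{\phi(x)}{\eps}\Bigr)\le\cos^{2(q+1)}\!\Bigl(\tfrac{\hat\eps}{\eps}\Bigr)=\cos^{2(q+1)}\!\Bigl(\arccos\bigl(\tfrac{h}{\eps}\bigr)-\tfrac{c_1 h}{\eps}\Bigr).
\]
It remains to bound $\cos\bigl(\arccos(\frac{h}{\eps})-\frac{c_1 h}{\eps}\bigr)$ by $C\frac{h}{\eps}$. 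Writing $\theta=\arccos(\frac{h}{\eps})$ so that $\cos\theta=\frac{h}{\eps}$ and $\sin\theta=\sqrt{1-(h/\eps)^2}\le 1$, the addition formula gives $\cos(\theta-\frac{c_1 h}{\eps})=\frac{h}{\eps}\cos(\frac{c_1 h}{\eps})+\sin\theta\,\sin(\frac{c_1 h}{\eps})\le\frac{h}{\eps}+\frac{c_1 h}{\eps}=(1+c_1)\frac{h}{\eps}$, using $\sin t\le t$. Raising to the power $2(q+1)$ yields $\vr(x)\le(1+c_1)^{2(q+1)}\bigl(\frac{h}{\eps}\bigr)^{2(q+1)}$, which is (\ref{rholow}) with $C=(1+c_1)^{2(q+1)}$. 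The only genuine obstacle is making the geometric bookkeeping precise — ensuring all simplices in play lie in $\overline\Omega$ and that the argument of $\sigma$ stays in $[-\frac{\pi}{2},\frac{\pi}{2}]$ — but the hypothesis $c_2\eps<\delta$ together with $h\le\eps$ is exactly what is needed for this. \endproof
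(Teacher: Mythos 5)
Your proof is correct and follows essentially the same route as the paper's: the mean value theorem with the gradient bound $c_1$ for the two middle inclusions, and the monotonicity of $\cos^{2(q+1)}$ together with $\cos\bigl(\arccos(\frac{h}{\eps})-c_1\frac{h}{\eps}\bigr)\leq C\,\frac{h}{\eps}$ for (\ref{rholow}) (you merely make the constant $C=(1+c_1)^{2(q+1)}$ explicit via the addition formula). The one blemish is the asserted inclusion $U_{c_2\eps}\subset U_{\eps\frac{\pi}{2}}$, which is false since $c_2=\frac{\pi}{2}+c_1>\frac{\pi}{2}$; the correct (and intended, per your parenthetical ``on the support'') fix is the paper's: one may assume $x\in\overline{U_{\eps\frac{\pi}{2}}}$, as otherwise $\vr(x)=0$ and (\ref{rholow}) is trivial.
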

\begin{proof} Since $\frac{h}{\eps}=\frac{1}{\gamma} < r_0$ and $r \mapsto \arccos(r) - c_1 r$ is strictly decreasing we have
\begin{displaymath} 
 \hat \eps = \eps \left( \arccos \bigl(\frac{h}{\eps} \bigr) - c_1 \frac{h}{\eps} \right) > \eps \left( \arccos(r_0) - c_1 r_0 \right)=0.
\end{displaymath}
In particular, $\Gamma= \lbrace x \in \Omega \, : \, \phi(x) =0 \rbrace \subset U_{\hat \eps}$. Next, if $x \in U_{\hat \eps} \cap T$, then 
\begin{displaymath}
| \phi(b_{i,T}) | \leq | \phi(x) | + | \phi(b_{i,T}) - \phi(x) | < \hat \eps + c_1 h = \eps \arccos \bigl(\frac{h}{\eps} \bigr), \quad i=1,\ldots,L,
\end{displaymath}
which implies that $x \in D_h$. Similarly, we see that $D_h \subset U_{\tilde \eps}$, where $\tilde \eps= \eps \arccos \bigl(\frac{h}{\eps} \bigr) + c_1 h \leq c_2 \eps$.
It remains to show (\ref{rholow}) for  $x \in D_h \setminus U_{\hat \eps}$. We may assume in addition that $x \in \overline{U_{\epsilon \frac{\pi}{2}}}$ as otherwise $\vr(x)=0$.
Then we have
\begin{displaymath}
\cos \bigl( \frac{\phi(x)}{\eps} \bigr) \leq \cos \bigl( \frac{\hat \eps}{\eps} \bigr) = \cos \bigl( \arccos (\frac{h}{\eps}) - c_1 \frac{h}{\eps} \bigr) \leq C \frac{h}{\eps},
\end{displaymath}
which yields the desired estimate.
\end{proof}

Next, let us define  the finite element space
\begin{equation} \label{Vr}
\Vr := \left\{ \phi_h \in C(D_h) : \phi_h \mbox{ is affine on } T
\mbox{ for all }  T \in \Trt \right\}.
\end{equation}
Motivated by (\ref{WF}), our fully practical  scheme reads: Find $u_h \in \Vr$ such that
\begin{equation} \label{WFh}
\ah(u_h,v_h) = l_h(v_h)  \qquad \forall \ v_h \in \Vr,
\end{equation}
where the   forms $a_h$ and $l_h$ are given by
\begin{subequations}
\begin{align}
\ah(v_1,v_2) & :=  
\eps^{-1} \sum_{T \in \Trt} Q_T \left[  \vr \, I_h \hat{A} \,\nabla v_1 \cdot \nabla v_2  
+ \vr \, I_h \hat{a}_0\,v_1\,v_2 \right] \, 
| \nabla I_h \phi_{|T} |, 
\label{ah}\\
l_h(v) &:= 
\eps^{-1} \sum_{T \in \Trt} Q_T \left[ \vr\, I_h \hat{f} \,v \right] \, | \nabla I_h \phi_{|T}  |.  \label{defgh}
\end{align}
\end{subequations}
Furthermore, we have abbreviated $\hat{A}(x):= A(\hat{p}(x)), \,  \hat{a}_0(x):= a_0(\hat{p}(x)), \hat{f}(x):= f(\hat{p}(x))$
and remark that these are used in the scheme  rather than
 $A^e$, $a_0^e$ and $f^e$, since in practice the evaluation of $\hat{p}$
is easier compared to $p$.

\begin{rem} \label{practical}
In contrast to other methods, which require the determination of and integration over an 
approximate surface
$\Gamma_h$ or a suitable narrow band, 
the implementation of (\ref{WFh}) is rather straightforward. The underlying
geometry is incorporated through the level set function $\phi$ and the projection $\hat{p}$. Note that $\hat p$ is only
required at the grid points of $\Trt$.  
\end{rem}
Let us introduce
\begin{equation}  \label{normh}
\displaystyle
\Vert v_h \Vert_h := \left( \eps^{-1} \sum_{T \in \Trt} Q_T \left[ \vr \bigl( | v_h |^2 + | \nabla v_h |^2  \bigr) \right] \right)^{\frac{1}{2}}, \quad v_h \in \Vr.
\end{equation}
In view of (\ref{Ihgradbelow}), (\ref{ell1}) and (\ref{a0}) there exists $c_3>0$, which is independent of $h$, such that
\begin{equation} \label{normeq1}
c_3 \Vert v_h \Vert_h^2 \leq a_h(v_h,v_h) \qquad \mbox{ for all } v_h \in \Vr.
\end{equation}
In particular we have:
\begin{lem} The discrete problem {\rm (\ref{WFh})} has a unique solution $u_h \in \Vr$ 
for all $0< h <\eps$, $c_2\eps <  \delta$.
\end{lem}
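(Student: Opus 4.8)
The plan is to establish existence and uniqueness via the Lax--Milgram lemma (or, since we are in finite dimensions, simply via coercivity of a symmetric bilinear form on a finite-dimensional space). First I would observe that $\Vr$ is a finite-dimensional vector space, so it suffices to show that the linear map $v_h \mapsto a_h(v_h,\cdot)$ on $\Vr$ is injective; equivalently, that $a_h(v_h,v_h)=0$ forces $v_h=0$. The form $a_h$ is clearly bilinear and symmetric (since $\hat A$ is symmetric), so the whole problem reduces to the coercivity estimate already recorded in (\ref{normeq1}), namely $c_3\Vert v_h\Vert_h^2\le a_h(v_h,v_h)$ with $c_3>0$ independent of $h$.

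Next I would argue that $\Vert\cdot\Vert_h$ is a genuine norm on $\Vr$, not merely a seminorm. Suppose $\Vert v_h\Vert_h=0$. Since each quadrature weight $\omega_i$ is strictly positive and $|T|>0$, and since $\vr\ge 0$, the only way the sum $\eps^{-1}\sum_{T\in\Trt}Q_T[\vr(|v_h|^2+|\nabla v_h|^2)]$ can vanish is if $\vr(b_{i,T})\,(|v_h(b_{i,T})|^2+|\nabla v_h(b_{i,T})|^2)=0$ at every quadrature node. By the definition (\ref{Tht}) of $\Trt$, every $T\in\Trt$ has all its quadrature nodes with $|\phi(b_{i,T})|\le\eps\arccos(h/\eps)<\eps\frac{\pi}{2}$, and since $\sigma(r)>0$ for $|r|<\frac{\pi}{2}$ we get $\vr(b_{i,T})>0$ at every node. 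Hence $v_h(b_{i,T})=0$ and $\nabla v_h(b_{i,T})=0$ at every quadrature node of every $T\in\Trt$; using that $v_h$ is affine on each $T$ (so $\nabla v_h$ is constant on $T$) this gives $v_h\equiv 0$ on $D_h$, i.e.\ $v_h=0$ in $\Vr$.

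Combining the two observations: if $a_h(v_h,v_h)=0$ then (\ref{normeq1}) gives $\Vert v_h\Vert_h=0$, hence $v_h=0$. Thus the linear system (\ref{WFh}) has a trivial kernel, and a square linear system with trivial kernel is uniquely solvable for every right-hand side $l_h$. Therefore (\ref{WFh}) has a unique solution $u_h\in\Vr$ for all $0<h<\eps$ with $c_2\eps<\delta$ (the restrictions on $h$ and $\eps$ are exactly what is needed for $\Vr$, $\vr$ and the estimate (\ref{normeq1}) to be meaningful).

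The only point that requires a little care — and hence the main (mild) obstacle — is verifying that $\Vert\cdot\Vert_h$ separates points of $\Vr$, i.e.\ the step above showing $\vr>0$ at all quadrature nodes used in the scheme. This is precisely why $\Trt$ was defined in (\ref{Tht}) with the cutoff at $\eps\arccos(h/\eps)$ rather than at $\eps\frac{\pi}{2}$: it guarantees that the quadrature never samples $\vr$ where it vanishes, so that $\Vert\cdot\Vert_h$ is a norm and $a_h$ is positive definite. Everything else is a direct appeal to finite-dimensional linear algebra together with the already-proven coercivity (\ref{normeq1}).
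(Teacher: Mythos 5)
Your proof is correct and follows essentially the same route as the paper: reduce to the homogeneous problem via finite-dimensional linear algebra, invoke the coercivity estimate (\ref{normeq1}), and use the definition (\ref{Tht}) of $\Trt$ to conclude that $\vr(b_{i,T})>0$ at every quadrature node, forcing $v_h\equiv 0$ on $D_h$. The only cosmetic difference is that the paper records the quantitative lower bound $\vr(b_{i,T})\geq (h/\eps)^{2(q+1)}$ (which it reuses later), whereas you only need strict positivity here.
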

\begin{proof} It is sufficient to verify that the homogeneous problem only has the trivial solution. Hence suppose that 
$a_h(u_h,v_h)=0$ for all $v_h \in \Vr$.  Inserting $v_h=u_h$ and using (\ref{normeq1}) we infer 
\begin{displaymath}
\sum_{T \in \Trt} \sum_{i=1}^L \omega_i \, \vr(b_{i,T}) \bigl( | u_h(b_{i,T}) |^2 + | \nabla u_{h|T} |^2 \bigr) \, | T | =0.
\end{displaymath}
The definition of $\Trt$ yields 
\begin{equation}  \label{rholower} 
\vr(b_{i,T}) = \cos^{2(q+1)} \bigl( \frac{\phi(b_{i,T})}{\eps} \bigr) \geq \bigl( \frac{h}{\eps} \bigr)^{2(q+1)} \quad \mbox{ for all } i=1,\ldots,L, \, T \in \Trt,
\end{equation}
so that $ \nabla u_{h|T} \equiv 0, u_h(b_{i,T})=0$ for all $i=1,\ldots,L$ and
all $T \in \Trt$. Hence   $u_h \equiv 0$ in $D_h$.
\end{proof}

Let us  formulate the main result of this paper.

\begin{thm} Let $u \in H^2(\Gamma)$ be the unique solution of (\ref{GP}) extended to $u^e$ via 
(\ref{extension}) and $u_h \in \Vr$ the unique solution of (\ref{WFh}). Let $\eps=\gamma h$ for $\gamma > \frac{1}{r_0}$. Then 
\begin{equation} \label{est1}
\Vert I_h u^e - u_h \Vert_h  \leq  C\, \bigl( h + \gamma^2 h^2 +  \gamma^{-(q+1)}  \bigr) \bigl( \Vert u \Vert_{H^2(\Gamma)} + \Vert f \Vert_{W^{1,\infty}(\Gamma)} \bigr).
\end{equation}
If in addition there exists a constant $c_4>0$ which is independent of $h$ such that
$c_4 h \leq h_T$ for all $ T \in \mathcal T_h$  with $ | T \cap \Gamma |>0$,
then
\begin{equation}  \label{est2}
\Vert u - u_h \Vert_{H^1(\Gamma)}  \leq  C \, \bigl( \sqrt{\gamma} h + \gamma^{\frac{5}{2}} h^2 + \gamma^{-(q+ \frac{1}{2}) }  \bigr) \bigl( \Vert u \Vert_{H^2(\Gamma)} + \Vert f \Vert_{W^{1,\infty}(\Gamma)} \bigr). 
\end{equation}
\end{thm}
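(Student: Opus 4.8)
\emph{Proof strategy.} The plan is to run a consistency\,+\,stability argument built on the coercivity (\ref{normeq1}) and the exact localised weak form (\ref{WF}), with the diffuse‑interface, interpolation and quadrature errors entering through a single consistency functional.

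\textbf{Reduction.} Set $e_h:=I_h u^e-u_h\in\Vr$. By (\ref{normeq1}) and (\ref{WFh}),
\begin{displaymath}
c_3\,\|e_h\|_h^2\le a_h(e_h,e_h)=a_h(I_h u^e,e_h)-l_h(e_h),
\end{displaymath}
so (\ref{est1}) will follow once we show
$|a_h(I_h u^e,v_h)-l_h(v_h)|\le C\,(h+\gamma^2h^2+\gamma^{-(q+1)})\,(\|u\|_{H^2(\Gamma)}+\|f\|_{W^{1,\infty}(\Gamma)})\,\|v_h\|_h$ for all $v_h\in\Vr$. Preliminary tools I would assemble first: the norm equivalences $\|v_h\|_h^2\simeq a_h(v_h,v_h)$ and $\|\vr\|_{\infty,T}\simeq|T|^{-1}Q_T[\vr]$ for $T\in\Trt$ — the latter because the definition of $\Trt$ via $\arccos(h/\eps)$ forces $\vr(b_{i,T})\ge(h/\eps)^{2(q+1)}$ (cf.\ (\ref{rholower})) while $\phi$ varies by at most $c_1h$ on $T$, so the value of $\vr$ on $T$ cannot drop by more than a fixed factor; the coarea estimates $\int_{U_r}\vr\,|D^k u^e|^2\,|\nabla\phi|\dx\le C\eps\,\|u\|_{H^2(\Gamma)}^2$ for $r\le c_2\eps$, $k\le2$, and hence $\|I_hu^e\|_h\le C\|u\|_{H^2(\Gamma)}$ together with $\int_{D_h}\vr|\nabla v_h|^2|\nabla\phi|\dx\le C\eps\|v_h\|_h^2$.

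\textbf{Peeling off the consistency errors.} I would pass from $a_h(I_hu^e,v_h)-l_h(v_h)$ to $\eps^{-1}\int_{U_{\hat\eps}}[A^e\nabla u^e\!\cdot\!\nabla v_h+a^e_0u^ev_h-f^ev_h]\,\vr|\nabla\phi|\dx$ in four groups of replacements, each estimated by Cauchy--Schwarz against $\|v_h\|_h$ using the tools above. (i) Replace $Q_T$ by $\int_T$: since the integrands are $\vr$ times piecewise polynomials of low degree, only the top one or two derivatives of $\vr$ survive in the Leibniz expansion, so by (\ref{qerror}) and (\ref{vrd}) each $E_T$ carries a factor $h_T^{q+1}\eps^{-(q+1)}\le\gamma^{-(q+1)}$ times $\|\vr\|_{\infty,T}^{1/2}\simeq(|T|^{-1}Q_T[\vr])^{1/2}$; summing gives the $O(\gamma^{-(q+1)})$ contribution. (ii) Replace $I_h\hat A,I_h\hat a_0,I_h\hat f,I_h\phi$ by $\hat A,\hat a_0,\hat f,\phi$ and $I_hu^e$ by $u^e$: by (\ref{inter}) this is $O(h)$, the slowest pieces coming from $(I-I_h)\hat f$ and $(I-I_h)\hat a_0$ (only $W^{1,\infty}$) and from $|\nabla I_h\phi|-|\nabla\phi|$. (iii) Replace $\hat A,\hat a_0,\hat f$ by $A^e,a^e_0,f^e$: here the key observation is that, because $A(x)\nu(x)=\nu(x)$ and $\nabla\phi\parallel\nu$ on $\Gamma$, the field $A(p)\nabla\phi/(A(p)\nabla\phi\cdot\nabla\phi)$ in (\ref{odesys}) is parallel to $\nu$ on $\Gamma$, hence the fibres of $p$ and of $\hat p$ are tangent to first order there and $|\hat p(x)-p(x)|\le C|\phi(x)|^2$ on $\mathrm{supp}(\vr)$; combined with $|\phi|\le c_2\eps$ on $D_h$ this group is $O(\eps^2)=O(\gamma^2h^2)$. (iv) Replace $\int_{D_h}$ by $\int_{U_{\hat\eps}}$: on $D_h\setminus U_{\hat\eps}$ one has $\vr\le C(h/\eps)^{2(q+1)}$ by (\ref{rholow}), while $\|v_h\|_{L^2(D_h\setminus U_{\hat\eps})}+\|\nabla v_h\|_{L^2(D_h\setminus U_{\hat\eps})}\le C\gamma^{q+1}\sqrt\eps\,\|v_h\|_h$ by (\ref{rholower}); the two powers of $\gamma$ cancel and this group is again $O(\gamma^{-(q+1)})$.

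\textbf{The diffuse–interface residual.} On $U_{\hat\eps}\subset D_h$ the restriction of $v_h$ lies in $H^1(U_{\hat\eps})$, so (\ref{WF}) with $r=\hat\eps$ gives
\begin{displaymath}
\eps^{-1}\!\!\int_{U_{\hat\eps}}\!\![A^e\nabla u^e\!\cdot\!\nabla v_h+a^e_0u^ev_h-f^ev_h]\,\vr|\nabla\phi|\dx=\eps^{-1}\!\!\int_{U_{\hat\eps}}\!\!\phi\,R\,v_h\,\vr|\nabla\phi|\dx ,
\end{displaymath}
and it remains to bound the right–hand side by $O(\gamma^2h^2+\gamma^{-(q+1)})$. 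A crude Cauchy--Schwarz using $|\phi|\le\bar r\eps$ on $\mathrm{supp}(\vr)$ and the coarea bound for $R$ only yields $O(\eps)=O(\gamma h)$, which is not sharp enough; the improvement uses the structure (\ref{rest}). Writing $R=R_0(p)+\phi R_1$ with $R_0(p):=\sum_\kappa b_\kappa(p)\,D^\kappa_\Gamma u(p)$ depending only on the base point, the $\phi^2R_1$ part is $O(\eps^2)$ directly; for $\phi R_0(p)$ I would parametrise $U_{\hat\eps}$ by $(p,s)=(p(x),\phi(x))$ and use that the profile $\sigma$ in (\ref{sigma}) is even: the $s$–odd factor $s\,\sigma(s/\eps)$ kills the $s$–even part of the remaining integrand, and controlling the $s$–odd part by $|g_p^{\mathrm{odd}}(s)|\le\tfrac12\int_{-|s|}^{|s|}|g_p'|$ and returning to a volume integral over $U_{\hat\eps}$ produces a spare factor $\eps$, i.e.\ an $O(\eps^2)=O(\gamma^2h^2)$ bound (the thin shell where $\vr$ fails to be bounded below being absorbed into $O(\gamma^{-(q+1)})$ as in step (iv)). Together with the Reduction step this proves (\ref{est1}).

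\textbf{From $\|\cdot\|_h$ to $H^1(\Gamma)$.} For (\ref{est2}) I would split $\|u-u_h\|_{H^1(\Gamma)}\le\|(I-I_h)u^e\|_{H^1(\Gamma)}+\|I_hu^e-u_h\|_{H^1(\Gamma)}$. The first term is $O(h)\|u\|_{H^2(\Gamma)}$ by a trace inequality on the elements meeting $\Gamma$, using the quasi‑uniformity $c_4h\le h_T$ there to bound the $h_T^{-1}$ factor, together with $|u^e|^2_{H^2(U_{Ch})}\le Ch\|u\|^2_{H^2(\Gamma)}$. For the second term I would prove the transfer estimate $\|v_h\|_{H^1(\Gamma)}\le C\sqrt\gamma\,\|v_h\|_h$ for all $v_h\in\Vr$: an element $T$ with $|T\cap\Gamma|>0$ belongs to $\Trt$ and has $\vr\gtrsim1$ on $T$, so by the trace and inverse estimates $\int_{T\cap\Gamma}(|v_h|^2+|\nabla_\Gamma v_h|^2)\le C h_T^{-1}Q_T[\vr(|v_h|^2+|\nabla v_h|^2)]\le C(c_4h)^{-1}Q_T[\cdots]$, and summation plus $\eps=\gamma h$ produces the factor $\gamma$. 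Applying this to $v_h=I_hu^e-u_h$ and inserting (\ref{est1}), and noting $\sqrt\gamma\cdot h=\sqrt\gamma h$, $\sqrt\gamma\cdot\gamma^2h^2=\gamma^{5/2}h^2$, $\sqrt\gamma\cdot\gamma^{-(q+1)}=\gamma^{-(q+1/2)}$, yields (\ref{est2}).

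The main obstacle is step (i): one must balance the $\eps^{-k}$ blow‑up of the derivatives of $\vr$ against the degree of exactness $q$ and the compensating power of $\vr$ in (\ref{vrd}), and it is exactly here that the choice $\sigma=\cos^{2(q+1)}$ and the $\arccos(h/\eps)$–threshold in $\Trt$ (which makes $\|\vr\|_{\infty,T}$ comparable to $|T|^{-1}Q_T[\vr]$) are used; closing this with only a factor $\gamma^{-(q+1)}$, rather than a negative power of $h$, is the heart of the argument. A secondary difficulty is the evenness/cancellation estimate in the diffuse–interface residual step, needed to upgrade the $\phi R$ term from $O(\gamma h)$ to $O(\gamma^2h^2)$.
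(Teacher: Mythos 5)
Your proposal follows essentially the same route as the paper: the coercivity‐plus‐consistency reduction for $e_h=I_hu^e-u_h$, the key auxiliary fact that $|\vr|_{0,\infty,T}\leq C\,\vr(b_{i,T})$ (hence $\eps^{-1}\sum_T|\vr|_{0,\infty,T}\Vert v_h\Vert_{1,T}^2\leq C\Vert v_h\Vert_h^2$) coming from the $\arccos(h/\eps)$ threshold, the same four groups of replacement errors, the odd-function cancellation $\int s\,\sigma(s/\eps)\,ds=0$ to upgrade the $\phi R$ residual from $O(\eps)$ to $O(\eps^2)$, and the Hansbo-type trace inequality with $c_4h\leq h_T$ for the transfer to $H^1(\Gamma)$. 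The only deviations are bookkeeping (e.g.\ using $\vr\leq C(h/\eps)^{2(q+1)}$ together with the lower bound on $\vr(b_{i,T})$ in step (iv) instead of the paper's $\vr\leq C(h/\eps)^{q+1}\sqrt{\vr}$), which lead to the same bounds.
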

The proof of these results will be given in the next section.

\begin{rem} 
The three terms on the right hand side of (\ref{est1}) are related to the different approximations that are used in the discretization. The first term is due
to the use of piecewise linear finite elements in order to discretize the solution and the level set function, while the second term arises from working with
the extended PDE in a narrow band of width $\epsilon = \gamma h$. Here, the factor $\gamma>1$ roughly measures how many grid points are used across the narrow band,
whereas $\gamma^{-(q+1)}$ reflects how well integrals involving the phase field function are approximated via the quadrature rule.
\end{rem}


\section{Error Analysis} \label{sec:3}
\setcounter{equation}{0}

Before we start with the actual error analysis, we first prove a useful  auxiliary result.

\begin{lem}  \label{ahprop}

\begin{equation} \label{phihest}
\eps^{-1} \sum_{T \in \Trt} | \vr |_{0,\infty,T} \Vert v_h \Vert_{1,T}^2 \leq C \Vert v_h \Vert_h^2, \quad \forall \, v_h \in \Vr.
\end{equation}
\end{lem}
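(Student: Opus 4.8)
The plan is to prove the inequality element by element: for each $T \in \Trt$ I will show
\[
\eps^{-1}\, |\vr|_{0,\infty,T}\, \Vert v_h \Vert_{1,T}^2 \;\le\; C\, \eps^{-1}\, Q_T\bigl[\vr\bigl(|v_h|^2+|\nabla v_h|^2\bigr)\bigr]
\]
with $C$ independent of $h$, $\gamma$ and $T$, and then sum over $T\in\Trt$. Two facts drive this: on $T$ the function $v_h$ is affine, so it is controlled on all of $T$ by its value and gradient at a single quadrature node; and, since $\eps=\gamma h$, the phase field function $\vr$ varies by at most a fixed factor over one simplex, so its supremum on $T$ is comparable to its values at the quadrature nodes.

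The key local estimate is
\[
|\vr|_{0,\infty,T} \le (1+c_1)^{2(q+1)}\, \vr(b_{i,T}), \qquad i = 1,\dots,L,\ T\in\Trt ,
\]
which I would derive as follows. By definition of $\Trt$ every node satisfies $|\phi(b_{i,T})/\eps| \le \arccos(h/\eps)$, hence $\cos(\phi(b_{i,T})/\eps) \ge h/\eps = 1/\gamma > 0$, so in particular $\vr(b_{i,T})>0$. Let $x^\ast \in T$ attain $|\vr|_{0,\infty,T}$; if $\vr(x^\ast)=0$ the bound is trivial, otherwise $|\phi(x^\ast)/\eps| < \tfrac{\pi}{2}$. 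Since $|\nabla\phi|\le c_1$ on $\overline\Omega$ by (\ref{gradbounds}) and $h_T\le h$,
\[
\Bigl| \tfrac{\phi(x^\ast)}{\eps} - \tfrac{\phi(b_{i,T})}{\eps} \Bigr| \le \tfrac{c_1 h_T}{\eps} \le \tfrac{c_1 h}{\eps} = \tfrac{c_1}{\gamma}.
\]
Writing $a=\phi(x^\ast)/\eps$ and $b=\phi(b_{i,T})/\eps$, the elementary inequality $\cos a \le \cos b + |a-b|$ together with $\cos b \ge 1/\gamma$ gives $\cos a/\cos b \le 1 + c_1$, and raising to the power $2(q+1)$ yields the claim. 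Averaging over $i$ and using $\sum_i\omega_i=1$ (exactness for constants) also gives $|\vr|_{0,\infty,T} \le C\sum_{i=1}^L\omega_i\,\vr(b_{i,T})$.

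Next I would bound $\Vert v_h\Vert_{1,T}$ by nodal data. Because $v_h$ is affine on $T$, $\nabla v_h$ is constant, so $\Vert\nabla v_h\Vert_{0,T}^2 = |T|\,|\nabla v_h|^2$; moreover $v_h(x)=v_h(b_{1,T})+\nabla v_h\cdot(x-b_{1,T})$ with $|x-b_{1,T}|\le h_T$ gives $|v_h|_{0,\infty,T}^2 \le 2|v_h(b_{1,T})|^2 + 2h_T^2|\nabla v_h|^2$, whence $\Vert v_h\Vert_{1,T}^2 \le C\,|T|\,(|v_h(b_{1,T})|^2 + |\nabla v_h|^2)$ with $C$ depending only on ${\rm diam}(\Omega)$. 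Multiplying by $\eps^{-1}|\vr|_{0,\infty,T}$, estimating the factor $|\vr|_{0,\infty,T}$ by $C\,\vr(b_{1,T})$ in front of $|v_h(b_{1,T})|^2$ and by $C\sum_i\omega_i\vr(b_{i,T})$ in front of $|\nabla v_h|^2$, and using that the weights $\omega_i$ are fixed positive numbers, one obtains
\[
\eps^{-1}|\vr|_{0,\infty,T}\Vert v_h\Vert_{1,T}^2 \le C\,\eps^{-1}|T|\sum_{i=1}^L \omega_i\,\vr(b_{i,T})\bigl(|v_h(b_{i,T})|^2 + |\nabla v_h|^2\bigr) = C\,\eps^{-1}Q_T\bigl[\vr\bigl(|v_h|^2+|\nabla v_h|^2\bigr)\bigr].
\]
Summing over $T\in\Trt$ and recalling the definition (\ref{normh}) of $\Vert\cdot\Vert_h$ finishes the proof.

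The only delicate point — and the heart of the argument — is the local estimate on $\vr$: one must exploit the full definition of $\Trt$, i.e. that $|\phi|$ is small at the quadrature \emph{nodes}, rather than merely $T\subset D_h$, in order to guarantee $\cos(\phi(b_{i,T})/\eps)\ge h/\eps$. This is precisely what keeps the ratio $\cos a/\cos b$ bounded independently of $h$ and $\gamma$ even though $\vr$ has derivatives that blow up like powers of $\eps^{-1}$ (cf.\ (\ref{vrd})). Everything else is elementary norm equivalence for affine functions on a single simplex.
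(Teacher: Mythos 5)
Your proof is correct and follows essentially the same route as the paper's: the key step in both is the local comparison $|\vr|_{0,\infty,T}\le C\,\vr(b_{i,T})$ (guaranteed by the definition of $\Trt$), combined with the equivalence of $\Vert v_h\Vert_{1,T}$ with nodal data for affine $v_h$ and $\sum_i\omega_i=1$. The only difference is cosmetic: you derive the local comparison directly from the Lipschitz continuity of $\cos$ and the explicit form of $\sigma$, whereas the paper uses the derivative bound (\ref{vrd}), Young's inequality and the lower bound (\ref{rholower}); both yield the same estimate.
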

\begin{proof} Let us  fix $T \in \Trt$. Using (\ref{vrd}) and Young's inequality we have for every $x \in T$
\begin{eqnarray*}
\vr(x) & \leq & \vr(b_{i,T}) +  | \nabla \vr |_{0,\infty,T} \, | x- b_{i,T}| \leq \vr(b_{i,T}) +  C \, \frac{h_T}{\eps} \, | \vr |_{0,\infty,T}^{\frac{2(q+1)-1}{2(q+1)}} \\
&\leq & \vr(b_{i,T}) +  \frac{1}{2} | \vr |_{0,\infty,T}  + C \,  \bigl( \frac{h}{\eps} \bigr)^{2(q+1)}.
\end{eqnarray*}
Taking the maximum with respect to $x$ and recalling (\ref{rholower}) we infer that
\begin{equation}  \label{rhoinfest}
| \vr |_{0,\infty,T}  \leq C \vr(b_{i,T}), \qquad i=1,\ldots,L.
\end{equation}
To proceed, we choose $x_T \in T$ such that $| v_h(x_T) | 
= | v_h  |_{0,\infty,T}$  and  have,  
as $\nabla v_h$ is constant on $T$,  that
$| v_h(x_T) | \leq | v_h(b_{i,T})| + h_T \,| \nabla v_{h|T} |$ 
for  $i=1,\ldots,L$. Hence, we deduce
\begin{displaymath}
\Vert v_h \Vert_{1,T}^2 \leq \bigl( | v_h(x_T) |^2 + | \nabla v_{h|T} |^2 \bigr) \, | T| \leq C \bigl( | v_h(b_{i,T}) |^2 + | \nabla v_{h|T} |^2 \bigr) \, | T |, \quad i=1,\ldots,L.
\end{displaymath}
Combining this bound with (\ref{rhoinfest}) and observing that $\sum_{i=1}^L \omega_i=1$ we obtain
\begin{displaymath}
| \vr |_{0,\infty,T} \,  \Vert v_h \Vert_{1,T}^2 \leq C \sum_{i=1}^L \omega_i \, \vr(b_{i,T}) \bigl( | v_h(b_{i,T}) |^2 + | \nabla v_{h|T} |^2 \bigr) \, | T | = C \, Q_T \left[ \vr \bigl( | v_h |^2 + | \nabla v_{h|T} |^2 \bigr) \right], 
\end{displaymath}
which concludes the proof of the lemma after summation over $T \in \Trt$. 
\end{proof}

\noindent
Let us now start the proof of the error bound. Define $e_h:= (I_h u^e)_{| D_h} -u_h \in \Vr$. We infer from (\ref{normeq1}) and (\ref{WFh})
\begin{equation} \label{s1s3}
c_3 \Vert e_h \Vert_h^2  \leq  a_h(e_h,e_h) = a_h(I_h u^e,e_h) - l_h(e_h) = :S_1+ S_2.
\end{equation}
Recalling the definition of $a_h$ we may write 
\begin{eqnarray}
\lefteqn{
S_1  =  \eps^{-1} \sum_{T \in \Trt} \left\{ Q_T \left[  \vr \, I_h  \hat{A} \,  \nabla I_h u^e \cdot \nabla e_h  
+ \vr \, I_h  \hat{a}_0  I_h u^e  e_h  \right] \right. } \nonumber  \\ 
& & - \left.  \int_T \left[  \vr \, I_h  \hat{A} \,  \nabla I_h u^e \cdot \nabla e_h  
+ \vr \,  I_h \hat{a}_0  I_h u^e  e_h \right]  \dx \right\}  | \nabla I_h \phi_{|T} |  \nonumber \\
& & + \eps^{-1} \int_{D_h} \bigl( \vr  I_h \hat{A} \,  \nabla (I_h u^e - u^e) \cdot \nabla e_h + \vr  I_h \hat{a}_0 (I_h u^e-u^e) \, e_h \bigr) | \nabla I_h \phi | \dx \nonumber \\ 
& & +  \eps^{-1} \int_{D_h} \bigl( \vr \, (I_h \hat{A} - \hat A)  \nabla u^e  \cdot \nabla e_h + \vr ( I_h \hat{a}_0 - \hat{a}_0) u^e \, e_h \bigr) | \nabla I_h \phi| \dx \nonumber \\ 
& &  + \eps^{-1} \int_{D_h} \vr \, \left[ \hat{A} \, \nabla \ut \cdot \nabla e_h 
+  \hat{a}_0 \,  \ut \, e_h \right]  \, \left[\, | \nabla I_h \phi | 
- | \nabla \phi |\, \right] \dx \nonumber \\
& & + \eps^{-1} \int_{D_h} \vr \, \left[ ( \hat{A} - A^e)\, \nabla u^e \cdot \nabla e_h 
+ (\hat{a}_0 - a_0^e)\, u^e \, e_h \right]  \, | \nabla \phi | \dx \nonumber \\
& & + \eps^{-1} \int_{D_h} \left[ A^e \, \nabla \ut \cdot \nabla e_h+ a_0^e  \, \ut \, e_h \right] \, \vr \, | \nabla \phi | \dx \nn 
 = : \sum_{i=1}^6 S_{1,i}.  \label{S1a}
\end{eqnarray}
Using (\ref{qerror}) and  (\ref{Ihgradbelow}) we obtain 
\begin{eqnarray*}
| S_{1,1} | & \leq & \eps^{-1} \sum_{T \in \Trt} \left\{ | E_T( \vr \, I_h  \hat{A}) | \, | \nabla I_h u^e_{|T} | \, | \nabla e_{h|T} | + | E_T \bigl( \vr \, I_h \hat{a}_0 I_h u^e \, e_h \bigr) | \right\} \nonumber \\
& \leq & C \, \eps^{-1} \sum_{T \in \Trt} | T | h_T^{q+1} \left\{ | \vr \, I_h  \hat{A} |_{q+1,\infty,T} \, | \nabla I_h u^e_{|T} | \, | \nabla e_{h|T} | + | \vr \, I_h  \hat{a}_0 I_h u^e e_h |_{q+1,\infty,T} \right\} \nonumber \\
& \leq & C \, \eps^{-1} \sum_{T \in \Trt} | T | h_T^{q+1} \Vert \vr \Vert_{q+1,\infty,T} \bigl( \Vert I_h \hat{A} \Vert_{1,\infty,T} + \Vert I_h \hat{a}_0 \Vert_{1,\infty,T} \bigr) \Vert I_h u^e \Vert_{1,\infty,T} \Vert e_h \Vert_{1,\infty,T} 
\nonumber \\
& \leq & C \, \eps^{-1} \sum_{T \in \Trt}  h_T^{q+1} \Vert \vr \Vert_{q+1,\infty,T}  \Vert I_h u^e \Vert_{1,T} \Vert e_h \Vert_{1,T} 
\end{eqnarray*}
where the last bound follows from an  inverse estimate and the fact that $a_{ij}, a_0$ are Lipschitz on $\Gamma$. Applying
(\ref{vrd}) and using  (\ref{phihest}), (\ref{inter}), (\ref{inclusions}) and  (\ref{geest}) we deduce
\begin{eqnarray}
| S_{1,1} | & \leq & C \eps^{-1} \sum_{T \in \Trt} h_T^{q+1} \eps^{-(q+1)} | \vr |_{0,\infty,T}^{\frac{1}{2}}  \Vert I_h u^e \Vert_{1,T} \Vert e_h \Vert_{1,T}   \nonumber \\
& \leq & C \, \bigl( \frac{h}{\eps} \bigr)^{q+1} \left( \eps^{-1} \sum_{T \in \Trt} \Vert I_h u^e \Vert_{1,T}^2 \right)^{\frac{1}{2}} \, \left( \eps^{-1} \sum_{T \in \Trt} | \vr |_{0,\infty,T} 
\Vert e_h \Vert_{1,T}^2 \right)^{\frac{1}{2}} \nonumber \\
& \leq & C \, \bigl( \frac{h}{\eps} \bigr)^{q+1} \left( \eps^{-1} \sum_{T \in \Trt} \Vert u^e \Vert_{2,T}^2 \right)^{\frac{1}{2}} \,  \Vert e_h \Vert_h \nonumber \\
& \leq & C \, \bigl( \frac{h}{\eps} \bigr)^{q+1} \eps^{-\frac{1}{2}} \Vert u^e \Vert_{H^2(U_{c_2 \eps})} \Vert e_h \Vert_h \leq 
C \, \bigl( \frac{h}{\eps} \bigr)^{q+1} \Vert u \Vert_{H^2(\Gamma)} \Vert e_h \Vert_h.  \label{S11}
\end{eqnarray}
Using similar arguments we deduce that
\begin{eqnarray}
 |S_{1,2}| &\leq & C \, \eps^{-1} \sum_{T \in \Trt} | \vr |_{0,\infty,T} \Vert I_h u^e - u^e \Vert_{1,T} \Vert e_h \Vert_{1,T} \nonumber \\
 & \leq &  C h  \left( \eps^{-1} \,  \sum_{T \in \Trt}  |  u^e |_{2,T}^2 \right)^{\frac{1}{2}} \, \left( \eps^{-1} \sum_{T \in \Trt} | \vr |_{0,\infty,T} \Vert e_h \Vert_{1,T}^2 \right)^{\frac{1}{2}} \nonumber \\
 & \leq & C h \Vert u \Vert_{H^2(\Gamma)} \Vert e_h \Vert_h \label{S12}
 \end{eqnarray}
as well as 
 \begin{eqnarray}
 |S_{1,3}| + | S_{1,4} | &\leq &  C \,  \eps^{-1} \sum_{T \in \Trt} | \vr |_{0,\infty,T} h_T \bigl( | \hat A |_{1,\infty,T} + | \hat{a}_0 |_{1,\infty,T} + | \phi |_{2,\infty,T} \bigr)  \Vert u^e \Vert_{1,T} \Vert e_h \Vert_{1,T} \nonumber \\
 & \leq &  C h  \left( \eps^{-1} \,  \sum_{T \in \Trt}  \Vert   u^e \Vert_{1,T}^2 \right)^{\frac{1}{2}} \, \left( \eps^{-1} \sum_{T \in \Trt} | \vr |_{0,\infty,T} \Vert e_h \Vert_{1,T}^2 \right)^{\frac{1}{2}} \nonumber \\
 & \leq & C h \Vert u \Vert_{H^1(\Gamma)} \Vert e_h \Vert_h. \label{S13}
 \end{eqnarray}
 Since $A \in C^1(\Gamma)$, it follows from (\ref{pdif}) and (\ref{inclusions}) that for $x \in D_h$ 
\begin{displaymath}
| \hat{A}(x) - A^e(x)| = | A(\hat{p}(x)) - A(p(x))| \leq 
C \,  | \hat{p}(x) - p(x) | \leq  C \,   \phi(x)^2 \leq C \, \epsilon^2
\end{displaymath}
and, similarly, $| \hat{a}_0(x) - a_0^e(x)| \leq C \, \epsilon^2$. This implies together with  (\ref{phihest}) and (\ref{geest}) 
\begin{eqnarray} 
| S_{1,5} | & \leq &      C \,\eps \int_{D_h} \vr \,  \left[\, | \nabla \ut | + 
| \ut |\, \right]\, \left[ \,| \nabla e_h | + | e_h |\, \right]  \dx \nonumber \\
& \leq & C \, \eps^2 \,  \left( \eps^{-1} \,  \sum_{T \in \Trt}  \Vert   u^e \Vert_{1,T}^2 \right)^{\frac{1}{2}} \, \left( \eps^{-1} \sum_{T \in \Trt} | \vr |_{0,\infty,T} \Vert e_h \Vert_{1,T}^2 \right)^{\frac{1}{2}} \nonumber \\
& \leq & C \, \eps^2 \, \eps^{-\frac{1}{2}} \Vert u^e \Vert_{H^1(U_{c_2 \eps})} \, \Vert e_h \Vert_h \leq C\, \eps^2 \,\Vert u \Vert_{H^1(\Gamma)} 
\,\Vert e_h \Vert_h. \label{S15} 
\end{eqnarray}
Combining (\ref{S11})--(\ref{S15}) we infer that
\begin{equation} \label{S1} 
 S_1   \leq   C \left( h + \eps^2 + \bigl( \frac{h}{\eps} \bigr)^{q+1}  \right) \Vert u \Vert_{H^2(\Gamma)} 
\,\Vert e_h \Vert_h +  \eps^{-1} \int_{D_h} \left[ A^e \, \nabla \ut \cdot \nabla e_h+ a_0^e  \, \ut \, e_h \right] \, \vr \, | \nabla \phi | \dx.  
\end{equation}
Next, it follows from  (\ref{defgh}) that
\begin{eqnarray*}
\lefteqn{ 
S_2  =  \eps^{-1} \sum_{T \in \Trt} \left\{  \int_T \vr \, I_h \hat{f} \, e_h \, \dx - Q_T \bigl( \vr \, I_h \hat{f} \, e_h \bigr)  \right\} | \nabla I_h \phi_{|T} | 
+ \eps^{-1} \int_{D_h} \vr (\hat{f} - I_h \hat{f}) e_h \, | \nabla I_h \phi | \dx }  \nonumber \\
& &  + \eps^{-1} \int_{D_h} \vr \, \hat{f} \, e_h  \left[ | \nabla \phi| -  | \nabla I_h \phi |  \right] \dx   + \eps^{-1} \int_{D_h}  \vr  [f^e - \hat{f}  ] \, e_h \, | \nabla \phi | \dx 
   -\eps^{-1}  \int_{D_h} \ft \,   e_h  \, \vr  \, | \nabla \phi | \dx. 
\end{eqnarray*}
Arguing in a similar way as for $S_{1,i}$, $i=1,3,4,5$, we obtain
\begin{equation} \label{S212}
 S_2  \leq  C \left( h + \eps^2 + \bigl( \frac{h}{\eps} \bigr)^{q+1}  \right) \Vert f \Vert_{W^{1,\infty}(\Gamma)} \, \Vert e_h \Vert_h - \eps^{-1}  \int_{D_h} \ft \,   e_h  \, \vr  \, | \nabla \phi | \dx
\end{equation}
and hence 
\begin{equation} \label{s1s2}
S_1 + S_2  \leq  C \left( h +  \epsilon^2 + \bigl( \frac{h}{\eps} \bigr)^{q+1}  \right) \left[ \Vert f \Vert_{W^{1,\infty}(\Gamma)} + \Vert u \Vert_{H^2(\Gamma)} \,\right] \Vert e_h \Vert_h  + S_3,
\end{equation} 
where
\begin{eqnarray} 
S_3 &:= &  \eps^{-1} \int_{D_h} \left[ A^e \, \nabla \ut \cdot \nabla e_h+ a_0^e  \, \ut \, e_h - \ft \, e_h \right] \, \vr \, | \nabla \phi | \dx \label{S3}  \\
& = & \eps^{-1}
\Bigl( \int_{U_{\hat \eps}} + \int_{D_h \setminus U_{\hat \eps}} \Bigr) \left[ A^e \, \nabla \ut \cdot \nabla e_h+ a_0^e  \, \ut \, e_h - \ft \, e_h \right] \, \vr \, | \nabla \phi | \dx = I + II. \nonumber
\end{eqnarray}
If we apply (\ref{WF}) with $r=\hat \eps$ and use the transformation $F$ introduced in Section 2.2 we obtain upon recalling that $\phi(F(p,s))=s$
\begin{eqnarray} 
I & = &  \eps^{-1}  \int_{U_{\hat \eps}} \phi \,R\, e_h \, \vr \, | \nabla \phi |  \dx \label{S23a} \\
& = & \eps^{-1} \int_{-\hat \eps}^{\hat \eps} s \, \sigma\left(\frac{s}{\eps}\right) 
\int_{\Gamma} R(F(p,s)) \, e_h(F(p,s))\,  
| \nabla \phi(F(p,s)) | \, \mu(p,s) \dS_p \ds. \nonumber
\end{eqnarray}
Here, $\mu(p,s)$ is the Jacobian determinant of $F$, which satisfies
\begin{equation}  \label{dif}
\displaystyle  \left| \mu(p,s) - \frac{1}{| \nabla \phi(F(p,s)) | } 
\right| \leq C \,|s | 
\qquad\mbox{for }p \in \Gamma, \;  |s| < \hat \eps.
\end{equation} 
Since
$\displaystyle \int_{-\hat \eps}^{\hat \eps} s
\,\sigma\left(\frac{s}{\eps}\right) 
\ds =0$, we deduce from (\ref{S23a}) that
\begin{align}
I &= \eps^{-1} 
\int_{-\hat \eps}^{\hat \eps} s\,
\sigma\left(\frac{s}{\eps}\right)  \int_{\Gamma}
\left[\, R(F(p,s))\,  | \nabla \phi(F(p,s)) | \, 
\mu(p,s) - R(p) \,\right]\, e_h(F(p,s))\dS_p \ds
\nn \\
& \qquad  + 
\eps^{-1} \int_{-\hat \eps}
^{\hat \eps} s\,
\sigma\left(\frac{s}{\eps}\right)  \int_{\Gamma}
\left[e_h(F(p,s))-e_h(p) \right] R(p) \dS_p \ds
=: I_1 + I_2. 
\label{St32b}
\end{align}
Recalling the form of $R$, (\ref{rest}), as well as $p(F(p,s))=p$ for $p \in \Gamma$, we have
\begin{displaymath}
R(F(p,s)) =  \sum_{1 \leq | \kappa | \leq 2} \bigl(  b_{\kappa}(F(p,s)) + s \, c_{\kappa}(F(p,s)) \bigr) 
    \,D_{\Gamma}^{\kappa} u(p),
\end{displaymath}
so that since $F(p,0)=p$ and  $b_{\kappa} \in C^1$, $c_\kappa \in C^0$ 
\begin{displaymath}
| R(F(p,s))  -  R(p) | \leq C \,|s| \, \sum_{1 \leq | \kappa | \leq 2} | D_{\Gamma}^{\kappa} u(p) | \qquad \forall \, p \in \Gamma, \; |s| < \hat \eps.
\end{displaymath}
Combining this bound with (\ref{dif}) we infer that
\begin{eqnarray}
|I_1| &\leq & C\,\eps^{-1}
\int_{-\hat \eps}^{\hat \eps} s^2\,
\sigma\left(\frac{s}{\eps}\right) 
\int_{\Gamma} \left[\,  | \nabla_{\Gamma} u(p) | + | D_{\Gamma}^2 u(p) | 
\,\right]\,
| e_h(F(p,s))| \dS_p \ds \nn \\
& \leq & C \eps^2 \Vert u \Vert_{H^2(\Gamma)} \left( \eps^{-1} \int_{-\hat \eps}^{\hat \eps} \sigma\left(\frac{s}{\eps}\right)  \int_{\Gamma} | e_h(F(p,s) |^2 \dS_p \ds \right)^{\frac{1}{2}}.
\label{I}
\end{eqnarray}
Similarly, we have that
\begin{eqnarray}
|I_2| &\leq &  
 \eps^{-1}
 \int_{-\hat \eps}^{\hat \eps} | s |\, 
\sigma\left(\frac{s}{\eps}\right)  \int_{\Gamma}
| e_h(F(p,s))- e_h(p)| \, | R(p)| \dS_p \ds \nn \\
& \leq &  C \int_{-\hat \eps}^{\hat \eps}
\sigma\left(\frac{s}{\eps}\right) 
\int_{\Gamma}  | R(p) | \, \left| \int_0^s 
|\nabla e_h(F(p,t))| \dt \right| \dS_p \ds
\nn \\
& \leq &  C\,\eps \int_{-\hat \eps}^{\hat \eps}
\sigma\left(\frac{t}{\eps}\right) 
\int_{\Gamma}  
|\nabla e_h(F(p,t))| \, | R(p) |  \dS_p \dt 
\nn \\  
& \leq &  C\,\eps^2   \Vert u \Vert_{H^2(\Gamma)} \left( \eps^{-1} \int_{-\hat \eps}^{\hat \eps} \sigma\left(\frac{s}{\eps}\right)  \int_{\Gamma} | \nabla e_h(F(p,s) |^2 \dS_p \ds \right)^{\frac{1}{2}},
\label{II}
\end{eqnarray}
where we have  used again (\ref{rest}) as well as  the fact that 
$\sigma \left( \frac{s}{\epsilon} \right) \leq 
\sigma \left( \frac{t}{\epsilon} \right)$ for $|t| \leq |s| 
\leq \hat \eps$.
Combining (\ref{St32b})--(\ref{II}) and applying  once more the transformation rule together with (\ref{inclusions}) and  (\ref{phihest}) we obtain
\begin{equation}
| I | \leq  C \, \epsilon^2 \Vert u \Vert_{H^2(\Gamma)} \left( \eps^{-1} \int_{D_h}
\vr \bigl(  | e_h|^2 +  | \nabla e_h |^2  \bigr)  \dx \right)^{\frac{1}{2}} \leq 
C \,\epsilon^2 \Vert u \Vert_{H^2(\Gamma)}   \,\Vert e_h \Vert_h. \label{S23}
\end{equation}
Since $\vr(x) \leq C \bigl( \frac{h}{\eps} \bigr)^{q+1} \, \sqrt{\vr(x)}, x \in D_h \setminus U_{\hat \eps}$ in view of (\ref{rholow}) we have
\begin{eqnarray*}
| II | & \leq & C \bigl( \frac{h}{\eps} \bigr)^{q+1} \eps^{-1} \int_{D_h \setminus U_{\hat \eps}} \bigl( | \nabla u^e| + | u^e| + | f^e| \bigr)  \, \sqrt{ \vr }  \bigl( | \nabla e_h | +  | e_h |\bigr)    \dx \\
& \leq &  C \bigl( \frac{h}{\eps} \bigr)^{q+1}   \left( \eps^{-1}  \int_{U_{c_2 \eps}} \bigl( | \nabla u^e|^2 +  | u^e|^2 + | f^e |^2 \bigr) \, \dx \right)^{\frac{1}{2}} \,  \left( \eps^{-1} \int_{D_h}
\vr \bigl( | \nabla e_h |^2 + | e_h|^2 \bigr)  \dx \right)^{\frac{1}{2}} \\
& \leq & C \bigl( \frac{h}{\eps} \bigr)^{q+1} \bigl( \Vert u \Vert_{H^1(\Gamma)} + \Vert f \Vert_{L^2(\Gamma)} \bigr) \, \Vert e_h \Vert_h.
\end{eqnarray*}
Inserting the above bounds into (\ref{S3}) we derive
\begin{equation}  \label{S4}
S_3  \leq C \Bigl(\eps^2 + \bigl( \frac{h}{\eps} \bigr)^{q+1} \Bigr) \bigl( \Vert u \Vert_{H^2(\Gamma)} + \Vert f \Vert_{L^2(\Gamma)} \bigr) \Vert e_h \Vert_h,
\end{equation}
so that  (\ref{s1s3}) and (\ref{s1s2}) yield
\begin{displaymath}
\Vert e_h \Vert_h \leq C \Bigl(h +  \eps^2 + \bigl( \frac{h}{\eps} \bigr)^{q+1} \Bigr) \bigl( \Vert u \Vert_{H^2(\Gamma)} + \Vert f \Vert_{W^{1,\infty}(\Gamma)} \bigr),
\end{displaymath}
proving (\ref{est1}). In order to show (\ref{est2}) we shall make use of the following trace--type inequality for $T \in \mathcal T_h$, which is a consequence of \cite[Lemma 3]{HH1} and \cite[Lemma 3]{HH2}:
\begin{equation}  \label{trace}
\Vert v \Vert_{0,T \cap \Gamma}^2 \leq C \bigl(  h_{T}^{-1} \Vert v \Vert_{0,T}^2 +  h_{T} \Vert \nabla v \Vert_{0,T}^2 \bigr) \quad \mbox{ for all } v \in H^1(T).
\end{equation}
If we combine this estimate with (\ref{inter}), the fact that $| \vr |_{0,\infty,T} =1$ if $T \cap \Gamma \neq \emptyset$ and (\ref{inclusions}) we infer that
\begin{eqnarray*}
\lefteqn{
\Vert u - u_h \Vert_{H^1(\Gamma)}^2  =  \sum_{|T \cap \Gamma| >0} \Vert u - u_h \Vert_{H^1(T \cap \Gamma)}^2 = \sum_{|T \cap \Gamma| >0} \Vert u^e - u_h \Vert_{H^1(T \cap \Gamma)}^2 }   \\
& \leq & C \sum_{| T \cap \Gamma | >0} \bigl( h_{T}^{-1} \Vert u^e - u_h \Vert_{1,T}^2 + h_T | u^e |_{2,T}^2 \bigr)  \leq   C \sum_{| T \cap \Gamma | >0}  h_{T}^{-1} | \vr |_{0,\infty,T}  \Vert I_h  u^e - u_h \Vert_{1,T}^2 + C h | u^e |_{2,U_{c_2  \eps}}^2.
\end{eqnarray*}
Finally, using the assumption that $c_4 h \leq h_T$ for all $ T \in \mathcal T_h$  with $ | T \cap \Gamma |>0$, (\ref{phihest}) and (\ref{geest}) we deduce
\begin{displaymath}
  \Vert u - u_h \Vert_{H^1(\Gamma)}^2 
 \leq  C \frac{\eps}{h}   \sum_{| T \cap \Gamma | >0} \eps^{-1}  | \vr |_{0,\infty,T}  \Vert I_h u^e - u_h \Vert_{1,T}^2 + C h \eps \Vert u \Vert_{H^2(\Gamma)}^2   \leq  C \gamma \Vert I_h u^e - u_h \Vert_h^2 + C h \eps \,  \Vert u \Vert_{H^2(\Gamma)}^2,
\end{displaymath}
 from which we infer (\ref{est2}) with the help of (\ref{est1}).


\section{Numerical Experiments} \label{sec:4}
\setcounter{equation}{0}
We investigate the experimental order of convergence (eoc) for the following errors:
$$
\mathcal{E}_1 =\eps^{-1} \sum_{T \in \Trt} Q_T \left[ \vr \, | I_h u^e - u_h|^2 \right]\,~~\mbox{and}~~
\mathcal{E}_2  =  \eps^{-1} \sum_{T \in \Trt} Q_T \left[ \vr \,  | \nabla(I_h u^e - u_h)|^2 \right].
$$
The corresponding calculations will be done for a circle (Example 1) and a sphere (Example 2) of radius 1, described as the zero level set of the function $\phi(x):= |x|^2 -1$. In this case one can verify without difficulty that the projection $p$ constructed in Section \ref{exten} coincides
with the closest point projection $\hat{p}$, so that we have $u^e(x)= u(\frac{x}{|x|})$ for $x \neq 0$. 
We use the finite element toolbox Alberta 2.0, \cite{alberta}, and implement a similar mesh refinement strategy to that in \cite{BNS} with a fine mesh constructed in $D_h$ 
and a coarser mesh in $\Omega \backslash D_h$. The resulting linear systems were solved using CG together with diagonal preconditioning. In all the examples we consider we set $a_{ij}= \delta_{ij}$, $i,j=1,\ldots,n+1$ and $a_0=1$ in (\ref{GP}). 

\subsection*{Example 1}
Let $\Omega = (-1.2, 1.2)^2$ and take  $\Gamma= \lbrace x \in \mathbb{R}^2 \, | \, | x  |= 1 \rbrace$ to be a circle of radius 1,  described as the zero level set of the function $\phi(x):= x_1^2+x_2^2-1$. 
In addition to $\mathcal E_1, \mathcal E_2$ we shall also investigate 
the errors appearing in  (\ref{est2}). To do so, we approximate $\Vert u - u_h \Vert_{L^2(\Gamma)}^2$ and $\Vert \nabla_{\Gamma}(u - u_h) \Vert_{L^2(\Gamma)}^2$ by 
$$
\mathcal{E}_3  = \sum_{l=0}^{L-1}\frac{2\pi}{L}| u(x_l) - u_h(x_l) |^2~~\mbox{and}~~
\mathcal{E}_4  = \sum_{l=0}^{L-1}\frac{2\pi}{L}| \nabla_{\Gamma}u(x_l) - \nabla_{\Gamma} u_h(x_l) |^2
$$
respectively, where we have chosen the  quadrature points
\begin{displaymath}
x_l:= \bigl(\cos(\frac{2 \pi l}{L}), \sin(\frac{2 \pi l}{L}) \bigr)^T, \quad l=0,\ldots,L-1.
\end{displaymath}
In our computations $L=200$ turned out to be sufficient. 
We choose $f$ so that  $u(x):=(x_1^2-x_2^2)/|x|^2$ solves (\ref{GP}) and fixed $\eps=5.333 h$. 
In Table \ref{t:2d1} we display the values of $\mathcal{E}_i$, $i=1,\ldots,4$, together with the eocs, for $q=2$, while in Table \ref{t:2d2} we display $\mathcal{E}_i$, $i=1,\ldots,4$, together with the eocs, for $q=6$. For
the smaller value $q=2$ we observe an eoc for $\mathcal E_2$ which is lower than two indicating
that in this case the term $\gamma^{-(q+1)}$ in (\ref{est1}) dominates. This effect disappears for the choice $q=6$, where we see
eocs close to two for $\mathcal{E}_2$ and $\mathcal{E}_4$. Furthermore, we observe eocs close to four for $\mathcal{E}_1$ and $\mathcal{E}_3$ suggesting that the error analysis can be
improved for the $L^2$--errors.



\begin{table}[!h]
 \begin{center}
 \begin{tabular}{ p{1.6cm}p{1.0cm}p{1.6cm}p{0.7cm}p{1.6cm}p{0.7cm}p{1.6cm}p{0.7cm}p{1.6cm}p{0.6cm}}
 \hline
$h$ & $\varepsilon$ & $\mathcal{E}_1$ & $eoc_1$ & $\mathcal{E}_2$ & $eoc_2$ & $\mathcal{E}_3$ & $eoc_3$ & $\mathcal{E}_4$ & $eoc_4$\\ 
 \hline
 \hline
{3.750e-02} & $0.2$ & {2.150e-05} & - & {1.152e-03} & -& {3.867e-05} & - & {1.555e-02} & -    \\ 
{1.875e-02} & $0.1$ & {1.356e-06} & {3.99} & {2.110e-04} & {2.45} & {2.500e-06} & {3.95} & {3.797e-03} & {2.03}   \\ 
{9.375e-03} & $0.05$ & {7.591e-08} & {4.16} & {9.743e-05} & {1.11} & {1.390e-07} & {4.17} & {9.703e-04} & {1.97}  \\ 
{4.687e-03} & $0.025$ & {4.259e-09} & {4.16} & {9.435e-05} & {0.05} & {7.079e-09} & {4.30} & {2.400e-04} & {2.02}   \\ 
{2.344e-03} & $0.0125$ & {1.806e-10} & {4.56} & {6.677e-05} & {0.50} & {1.721e-10} & {5.36} & {6.007e-05} & {2.00} \\ 
\hline
\end{tabular}
\end{center}
\vspace{-3mm}
\caption{Errors and experimental orders of convergence,  $q=2$}
\label{t:2d1}
\end{table}

\begin{table}[!h]
 \begin{center}
 \begin{tabular}{ p{1.6cm}p{1.0cm}p{1.6cm}p{0.7cm}p{1.6cm}p{0.7cm}p{1.6cm}p{0.7cm}p{1.6cm}p{0.6cm}}
 \hline
$h$ & $\varepsilon$ & $\mathcal{E}_1$ & $eoc_1$ & $\mathcal{E}_2$ & $eoc_2$ & $\mathcal{E}_3$ & $eoc_3$ & $\mathcal{E}_4$ & $eoc_4$\\ 
 \hline
 \hline
{3.750e-02} & $0.2$ & {4.132e-06} & - & {4.552e-04} & -& {1.068e-05} & - & {1.541e-02} & -    \\ 
{1.875e-02} & $0.1$ & {2.570e-07} & {4.01} & {9.600e-05} & {2.25} & {6.707e-07} & {3.99} & {3.739e-03} & {2.04}   \\ 
{9.375e-03} & $0.05$ & {1.603e-08} & {4.00} & {2.293e-05} & {2.07} & {4.194e-08} & {4.00} & {9.527e-04} & {1.97}  \\ 
{4.687e-03} & $0.025$ & {1.005e-09} & {4.00} & {5.701e-06} & {2.01} & {2.631e-09} & {3.99} & {2.357e-04} & {2.02}   \\ 
{2.344e-03} & $0.0125$ & {6.315e-11} & {3.99} & {1.455e-06} & {1.97} & {1.654e-10} & {3.99} & {5.896e-05} & {2.00} \\ \hline
\end{tabular}
\end{center}
\vspace{-3mm}
\caption{Errors and experimental orders of convergence,  $q=6$}
\label{t:2d2}
\end{table}

\subsection*{Example 2}
We set $\Omega = (-1.8, 1.8)^3$ and take   $\Gamma= \lbrace x \in \mathbb{R}^3 \, | \, | x  |= 1 \rbrace$ to be a sphere of radius 1,  described as the zero level set of the function $\phi(x):= x_1^2+x_2^2+x_3^2-1$. 
As in Example 1, 
in addition to $\mathcal E_1, \mathcal E_2$ we shall also investigate 
the errors appearing in  (\ref{est2}) which we approximate by
the quadrature rules
\begin{displaymath}
\mathcal{E}_3  =  \sum_{k=0}^{2L-1} \sum_{l=0}^{L-1} (\frac{\pi}{L})^2 | u(x_{k,l}) - u_h(x_{k,l}) |^2 \, \sin(\frac{l \pi}{L})
\end{displaymath}
and 
\begin{displaymath}
\qquad \quad  \mathcal{E}_4  =  \sum_{k=0}^{2L-1} \sum_{l=0}^{L-1} ( \frac{\pi}{L} )^2 | \nabla_{\Gamma}u(x_{k,l}) - \nabla_{\Gamma} u_h(x_{k,l}) |^2 \, \sin(\frac{l \pi}{L}).
\end{displaymath}
Here,
\begin{displaymath}
x_{k,l} = \bigl( \cos(\frac{k \pi}{L}) \sin( \frac{l \pi}{L}), \sin ( \frac{k \pi}{L}) \sin( \frac{l \pi}{L}), \cos( \frac{l \pi}{L}) \bigr)^T, \quad k=0,\ldots,2L-1, \; l=0,\ldots,L-1
\end{displaymath}
and $L=200$. 
We choose $f$ so that  $u(x):=(x_1^2-x_2^2)/|x|^2$ solves (\ref{GP}) and set $\eps = 5.333 h$.
Due to symmetry, we only solve for $u_h$ over $D_h$ in the positive octant.
In Tables \ref{t:3d1} and \ref{t:3d2} we display the values of $\mathcal{E}_i$, $i=1,\ldots,4$, together with the eocs, for $q=1$ and $q=6$ respectively and observe a similar behaviour as in the two--dimensional test example.   


\subsection*{Example 3}
Here we consider an example similar to the example in Section 9.2 of \cite{DzE13}. We set $\Omega = (-2, 2)^3$ and take $\Gamma$ to be the 
zero level surface of 
\begin{eqnarray*}
\phi(x)& = &(x_1^2-1)^2 +(x_2^2-1)^2 +(x_3^2-1)^2 +(x_1^2+ x_2^2-3)^2 \\
&&+(x_1^2+x_3^2-3)^2+(x_2^2+x_3^2-3)^2- 10.
\end{eqnarray*}
We set 
$f(x)=10000\sin(5(x_1+x_2+x_3)+2.5)$ and take $h=2.2097$e-02, $\varepsilon=0.2$ as well as $q=1$. In Figure \ref{f:3d} we display the approximate solution $u_h$ plotted on the zero level surface of $I_h \phi$.

\begin{table}[!h]
 \begin{center}
 \begin{tabular}{ p{1.6cm}p{0.6cm}p{1.6cm}p{0.7cm}p{1.6cm}p{0.7cm}p{1.6cm}p{0.7cm}p{1.6cm}p{0.6cm}}
 \hline
$h$ & $\varepsilon$ & $\mathcal{E}_1$ & $eoc_1$ & $\mathcal{E}_2$ & $eoc_2$ & $\mathcal{E}_3$ & $eoc_3$ & $\mathcal{E}_4$ & $eoc_4$\\ 
 \hline
 \hline
{7.500e-02} & $0.4$ & {3.425e-05} & - & {5.504e-03} & -& {8.673e-07} & - & {1.978e-03} & -    \\ 
{3.750e-02} & $0.2$ & {6.020e-07} & {5.83} & {5.125e-04} & {3.43} & {1.230e-07} & {2.82} & {4.985e-04} & {1.99}   \\ 
{1.875e-02} & $0.1$ & {1.274e-08} & {5.56} & {8.141e-05} & {2.65} & {9.393e-09} & {3.71} & {9.393e-09} & {3.71}  \\ 
{9.375e-03} & $0.05$ & {3.729e-10} & {5.09} & {2.361e-05} & {1.79} & {5.447e-10} & {4.11} & {3.214e-05} & {2.03}   \\ 
\hline
\end{tabular}
\end{center}
\vspace{-3mm}
\caption{Errors and experimental orders of convergence,  $q=1$}
\label{t:3d1}
\end{table}

\begin{table}[!h]
 \begin{center}
 \begin{tabular}{ p{1.6cm}p{0.6cm}p{1.6cm}p{0.7cm}p{1.6cm}p{0.7cm}p{1.6cm}p{0.7cm}p{1.6cm}p{0.6cm}}
 \hline
$h$ & $\varepsilon$ & $\mathcal{E}_1$ & $eoc_1$ & $\mathcal{E}_2$ & $eoc_2$ & $\mathcal{E}_3$ & $eoc_3$ & $\mathcal{E}_4$ & $eoc_4$\\ 
 \hline
 \hline
{7.500e-02} & $0.4$ & {1.134e-06} & - & {8.248e-04} & -& {1.439e-06} & - & {2.079e-03} & -    \\ 
{3.750e-02} & $0.2$ & {3.627e-08} & {4.97} & {1.440e-04} & {2.52} & {9.382e-08} & {3.94} & {5.034e-04} & {2.05}   \\ 
{1.875e-02} & $0.1$ & {1.721e-09} & {4.40} & {3.212e-05} & {2.16} & {6.245e-09} & {3.91} & {1.308e-04} & {1.94}  \\ 
{9.375e-03} & $0.05$ & {9.899e-11} & {4.12} & {7.789e-06} & {2.04} & {3.820e-10} & {4.03} & {3.197e-05} & {2.03}   \\ 
\hline
\end{tabular}
\end{center}
\vspace{-3mm}
\caption{Errors and experimental orders of convergence,  $q=6$}
\label{t:3d2}
\end{table}

\begin{figure}[h]
\centering
\subfigure{\includegraphics[height = 0.5\textwidth]{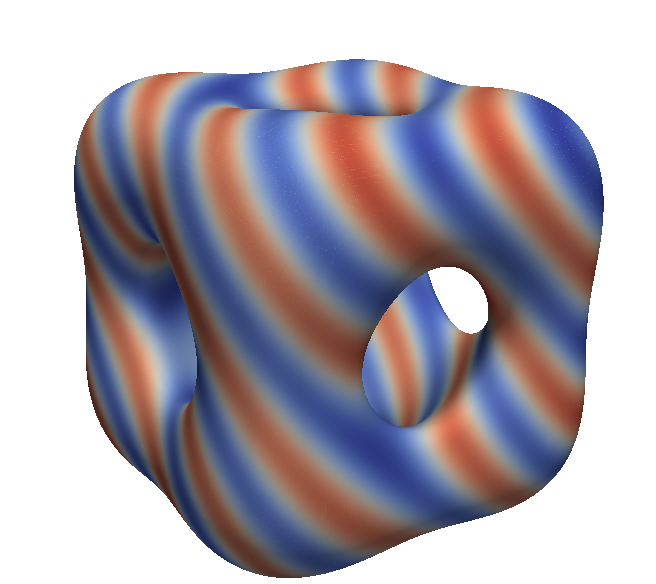}} \hspace{5mm}
\caption{Computational results from Example 3: $u_h$ plotted on the zero level surface of $I_h\phi$. Colouring ranges from the
minimum -86.45 to the maximum 99.57 of the solution.}
\label{f:3d}
\end{figure}

\subsection{Results using piecewise quadratic finite elements}
Even though we have restricted our error analysis to the case of piecewise linear finite elements it is not difficult to apply our approach to quadratic elements. In order to do so, we use
\begin{equation} \label{Vrq}
\Vr := \left\{ v_h \in C(D_h) : v_h \mbox{ is quadratic on } T
\mbox{ for all }  T \in \Trt \right\}
\end{equation}
instead of (\ref{Vr}) and define the   forms $a_h$ and $l_h$ (for the case $a_{ij}=\delta_{ij}, a_0=1$) by
\begin{eqnarray*}
\ah(v_1,v_2) & := &  
\eps^{-1} \sum_{T \in \Trt} Q_T \left[  \vr \, \nabla v_1 \cdot \nabla v_2 \, | \nabla I_h\phi|   
+ \vr \, v_1\,v_2 \, | \nabla I_h \phi | \right] \\
l_h(v) &:= &  
\eps^{-1} \sum_{T \in \Trt} Q_T \left[ \vr\, I_h \hat{f} \,v \, | \nabla I_h\phi | \right],
\end{eqnarray*}
where $I_h$ denotes the Lagrange interpolation operator for piecewise quadratic finite elements. 
The results in Table \ref{t:q1} correspond to the setting outlined in {Example 1}. Using a quadrature rule of order $q=6$ we see eocs close to order four for $\mathcal{E}_2$ and $\mathcal{E}_4$ in contrast to the eocs close to order two, that are displayed in Table \ref{t:2d2}, for the corresponding affine finite element approximation. The fact that the eocs for $\mathcal E_1$ and $\mathcal E_3$ are close to four (rather than six as expected for quadratic elements) is a consequence of
the term $\eps^2=\gamma^2 h^2$ in (\ref{est1}) which now dominates. 

\begin{table}[!h]
 \begin{center}
 \begin{tabular}{ p{1.6cm}p{0.8cm}p{1.6cm}p{0.7cm}p{1.6cm}p{0.7cm}p{1.6cm}p{0.7cm}p{1.6cm}p{0.6cm}}
 \hline
$h$ & $\varepsilon$ & $\mathcal{E}_1$ & $eoc_1$ & $\mathcal{E}_2$ & $eoc_2$ & $\mathcal{E}_3$ & $eoc_3$ & $\mathcal{E}_4$ & $eoc_4$\\ 
 \hline
 \hline
{1.875e-02} & $0.2$ & {2.378e-06} & - & {6.876e-05} & -& {7.063e-06} & - & {3.495e-08} & -    \\ 
{9.375e-03} & $0.1$ & {1.471e-07} & {4.01} & {4.265e-06} & {4.01} & {4.445e-07} & {3.99} & {1.913e-09} & {4.19}   \\ 
{4.687e-03} & $0.05$ & {9.169e-09} & {4.00} & {2.661e-07} & {4.00} & {2.783e-08} & {4.00} & {1.289e-10} & {3.89}  \\ 
{2.344e-03} & $0.025$ & {5.727e-10} & {4.00} & {1.663e-08} & {4.00} & {1.740e-09} & {4.00} & {7.712e-12} & {4.06}   \\ 
{1.172e-03} & $0.0125$ & {3.579e-11} & {4.00} & {1.043e-09} & {3.99} & {1.088e-10} & {4.00} & {4.825e-13} & {4.00} \\ 
\hline
\end{tabular}
\end{center}
\vspace{-3mm}
\caption{Errors and experimental orders of convergence for Example 1, using $(\ref{Vrq})$ with $q=6$}
\label{t:q1}
\end{table}


\noindent {\bf Acknowledgements}\\
VS would like to thank the Isaac Newton Institute for Mathematical Sciences for support and hospitality during the programme {\it Geometry, compatibility and structure preservation in computational differential equations} when work on this paper was undertaken. 
This work was supported by: EPSRC grant number EP/R014604/1.

\begin{appendix}
\setcounter{equation}{0}
\renewcommand{\theequation}{\Alph{section}.\arabic{equation}}
\section{Appendix}

The aim of this appendix is to derive certain properties of the projection $p$ and the extension $u^e(x)=u(p(x))$ which have been used in the
analysis above. To begin, we infer from the definition of $u^e$ for $1 \leq i,j \leq n+1$ that
\begin{eqnarray}
u^e_{x_i}(x) & = &  \sum_{k=1}^{n+1} \underline{D}_k u(p(x))\, p_{k,x_i}(x); \label{uexi} \\
u^e_{x_i x_j}(x) & = &  \sum_{k,l=1}^{n+1} \underline{D}_l\, 
\underline{D}_k u(p(x)) \,p_{k,x_i}(x)\, p_{l,x_j}(x) 
+ \sum_{k=1}^{n+1} \underline{D}_k u(p(x))\, p_{k,x_i x_j}(x). \label{uexixj}
\end{eqnarray}

\begin{lem}
Let $k \in \lbrace 0,1,2 \rbrace$ and $u \in H^k(\Gamma)$. Then 
\begin{equation} \label{geest}
| u^e |_{H^k(U_r)} \leq C \sqrt{r} \Vert u \Vert_{H^k(\Gamma)}, \quad 0<r<\delta.
\end{equation}
\end{lem}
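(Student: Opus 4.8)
The plan is to pull everything back to $\Gamma$ by means of the $C^2$--diffeomorphism $F$ introduced in Section~\ref{exten}, using crucially that $u^e$ is constant along the integral curves $s\mapsto\gamma_p(s)$ of (\ref{odesys}).

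First I would record the change of variables that does all the work. Since $\phi(F(p,s))=s$, for every $r\in(0,\delta)$ the map $F$ restricts to a $C^2$--diffeomorphism of $\Gamma\times(-r,r)$ onto $U_r$, with positive Jacobian determinant $\mu(p,s)$; as $F\in C^2(\Gamma\times(-\delta,\delta);\mathbb R^{n+1})$ and $p\in C^2(U_\delta;\mathbb R^{n+1})$, after shrinking $\Omega$ (hence $\delta$) if necessary we may assume that $\mu$ and the derivatives of $p$ up to second order are bounded on $\overline{U_\delta}$ by a constant $C$. Using $p(F(p,s))=p$ for $p\in\Gamma$ (shown in Section~\ref{exten}), this yields for any $g\in L^1(\Gamma)$ the bound
\[
\int_{U_r} g(p(x))\dx=\int_{-r}^{r}\int_\Gamma g(p)\,\mu(p,s)\dS_p\ds\le 2C\,r\,\Vert g\Vert_{L^1(\Gamma)}.
\]
For $k=0$ I would simply note $u^e(F(p,s))=u(p(F(p,s)))=u(p)$, so the above with $g=|u|^2$ gives $\Vert u^e\Vert_{L^2(U_r)}^2\le 2Cr\Vert u\Vert_{L^2(\Gamma)}^2$.

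For $k=1,2$ the idea is to combine the chain--rule identities (\ref{uexi}) and (\ref{uexixj}) with the boundedness of $Dp$ and $D^2p$: assuming first $u\in C^\infty(\Gamma)$, (\ref{uexi}) gives the pointwise bound $|\nabla u^e(x)|\le C|\nabla_\Gamma u(p(x))|$, and (\ref{uexixj}) gives $|D^2u^e(x)|\le C\big(|\nabla_\Gamma u(p(x))|+|D_\Gamma^2u(p(x))|\big)$; squaring, integrating over $U_r$ and applying the displayed bound with $g=|\nabla_\Gamma u|^2$, resp.\ $g=|\nabla_\Gamma u|^2+|D_\Gamma^2u|^2$, produces $|u^e|_{H^1(U_r)}^2\le 2Cr\Vert u\Vert_{H^1(\Gamma)}^2$ and $|u^e|_{H^2(U_r)}^2\le 2Cr\Vert u\Vert_{H^2(\Gamma)}^2$. (It is the second, lower--order sum in (\ref{uexixj}) that forces the full $H^2$--norm on the right rather than the seminorm.) The general case $u\in H^k(\Gamma)$ then follows by density of $C^\infty(\Gamma)$ in $H^k(\Gamma)$, since $u\mapsto u^e$ is linear and the estimate just proved shows it is bounded from $H^k(\Gamma)$ into $H^k(U_r)$. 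I do not expect any real obstacle here: the only point requiring a little care is ensuring the constant is independent of $r$, which is handled by working on the fixed neighbourhood $U_\delta$ on which $\mu$, $Dp$ and $D^2p$ are bounded; the remainder is a routine change of variables plus chain rule.
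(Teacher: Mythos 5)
Your argument is correct and is essentially identical to the paper's own proof, which likewise pulls everything back to $\Gamma\times(-r,r)$ via the $C^2$--diffeomorphism $F$ with Jacobian $\mu$, uses the chain--rule identities (\ref{uexi})--(\ref{uexixj}) together with $p\in C^2$ to dominate $|D^\beta u^e|$ by the surface derivatives of $u$ at $p(x)$, and gains the factor $r$ from the length of the $s$--interval. The only difference is that you spell out the $L^1$ change--of--variables bound and the closing density argument, which the paper leaves implicit.
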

\begin{proof} Using the transformation $F: \Gamma \times (-r,r) 
\rightarrow U_{r}$ with Jacobian determinant $\mu$, (\ref{uexi}), (\ref{uexixj}) and the fact that  $p \in C^2$ we obtain  
\begin{displaymath}
| u^e |_{H^k(U_r)}^2  = \sum_{| \beta | =k}
\int_{U_r} |D^\beta u^e(x)|^2 \dx  \leq C \, \sum_{|\kappa|=0}^{|\beta|} \int_{-r}^{r} \int_{\Gamma}   |D^\kappa_{\Gamma} \,u(p)|^2  \, \mu(p,s)
\dS_p  \ds  \leq   C \,r \,\Vert u \Vert_{H^k(\Gamma)}^2
\end{displaymath}
and the result follows.
\end{proof}

In order to obtain  more precise information about $p$ and its derivatives we  essentially follow the argument in
\cite[Section 2.1]{DER14}, where the corresponding formulae were derived for the case $A=I$. 
For  $x \in U_{\delta}$, we
consider the function
\begin{displaymath}
\eta_x(\tau):= F(p(x),(1-\tau) \phi(x)) = \gamma_{p(x)}((1-\tau) \phi(x)), \quad \tau \in [0,1],
\end{displaymath}
where $\gamma_p$ was defined in (\ref{odesys}). Since $p \in C^2$, it follows that $(x,\tau) \mapsto \eta_x(\tau)$ has continuous partial derivatives of second order
with respect to $x$. 
Clearly, $\eta_x(1)=F(p(x),0)= p(x), \eta_x(0)=F(p(x),\phi(x))=x$. Furthermore, we infer from (\ref{odesys}) that for $k=1,\ldots,n+1$
\begin{equation}  \label{etak}
\eta_{x,k}'(\tau) = - \phi(x) \,\gamma_{p(x),k}'((1- \tau) \phi(x)) 
=  - \phi(x) \, \frac{1}{z_x(\tau)} \sum_{l=1}^{n+1} a_{kl}(p(x)) \phi_{x_l}(\eta_x(\tau)), 
\end{equation}
where $z_x(\tau)=\sum_{r,s=1}^{n+1} a_{rs}(p(x)) \phi_{x_r}(\eta_x(\tau)) \phi_{x_s}(\eta_x(\tau))$. Let us abbreviate $w(x):=z_x(0)$.
The following relations will help to simplify some of the subsequent calculations.

\begin{lem} There exist $d^A_k, d^w \in C^2$ such that 
\begin{eqnarray}
\sum_{l=1}^{n+1} a_{kl}(p(x)) \phi_{x_l}(x) & = & \phi_{x_k}(x) + \phi(x) d^A_k(x), \label{aklnu} \\
w(x) & = &  | \nabla \phi(x) |^2 + \phi(x) d^w(x). \label{z} 
\end{eqnarray}
Furthermore, if $f:\Gamma \rightarrow \mathbb{R}$ is differentiable, then there are $d^f_k \in C^2$ such that
\begin{equation}  \label{dkf}
\sum_{k=1}^{n+1} \underline{D}_k f(p(x)) \phi_{x_k}(x) = \phi(x) \, \sum_{k=1}^{n+1} \underline{D}_k f(p(x)) d_k^f(x). 
\end{equation}
\end{lem}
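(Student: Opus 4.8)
The plan is to derive all three identities from a single auxiliary fact: the vector field $x\mapsto\nabla\phi(x)-\nabla\phi(p(x))$ carries a factor $\phi(x)$ with a $C^2$ cofactor that does not involve $f$. To prove this, I would start from the flow representation $x=F(p(x),\phi(x))$ and $p(x)=F(p(x),0)$ coming from \eqref{Finv} and \eqref{pprop}; the fundamental theorem of calculus then gives
\[
x-p(x)=\int_0^{\phi(x)}\partial_sF(p(x),s)\,{\rm d}s=\phi(x)\,\vec g(x),\qquad \vec g(x):=\int_0^1\partial_sF(p(x),t\phi(x))\,{\rm d}t .
\]
A short bootstrap shows $\partial_sF\in C^2$: by \eqref{odesys} one has $\partial_sF(p,s)=V(p,F(p,s))$ with $V(p,y):=A(p)\nabla\phi(y)\big/\bigl(A(p)\nabla\phi(y)\cdot\nabla\phi(y)\bigr)$, whose denominator is bounded below by $\min\{\alpha,1\}\,c_0^2>0$ in view of \eqref{ell1} and \eqref{gradbounds} and which is $C^2$ in $(p,y)$ since $a_{ij}\in C^2(\Gamma)$; composing the $C^2$ map $V$ with the $C^2$ map $(p,s)\mapsto(p,F(p,s))$ gives $\partial_sF\in C^2$, hence $\vec g\in C^2$. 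Taylor expanding $\nabla\phi$ along the segment from $p(x)$ to $x$,
\[
\nabla\phi(x)-\nabla\phi(p(x))=\Bigl(\int_0^1 D^2\phi\bigl(p(x)+t(x-p(x))\bigr)\,{\rm d}t\Bigr)(x-p(x))=\phi(x)\,\vec h(x),
\]
with $\vec h(x):=\bigl(\int_0^1 D^2\phi(p(x)+t(x-p(x)))\,{\rm d}t\bigr)\vec g(x)\in C^2$; crucially $\vec h$ is built only from $\phi$ and $p$, not from $f$.

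Given this, \eqref{aklnu} is linear algebra. For $q=p(x)\in\Gamma$ the gradient $\nabla\phi(q)$ is normal to $\Gamma$, hence a multiple of $\nu(q)$, so the normalisation $A(q)\nu(q)=\nu(q)$ yields $A(p(x))\nabla\phi(p(x))=\nabla\phi(p(x))$. Adding and subtracting this term,
\[
A(p(x))\nabla\phi(x)-\nabla\phi(x)=\bigl(A(p(x))-I\bigr)\bigl(\nabla\phi(x)-\nabla\phi(p(x))\bigr)=\phi(x)\,\bigl(A(p(x))-I\bigr)\vec h(x),
\]
which is \eqref{aklnu} with $d^A_k$ the $k$-th component of $(A(p(\cdot))-I)\vec h\in C^2$. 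Then \eqref{z} is immediate: since $w(x)=z_x(0)$ and $\eta_x(0)=x$ we have $w(x)=\sum_{r,s}a_{rs}(p(x))\phi_{x_r}(x)\phi_{x_s}(x)=\nabla\phi(x)\cdot\bigl(A(p(x))\nabla\phi(x)\bigr)$, and inserting \eqref{aklnu} gives $w(x)=|\nabla\phi(x)|^2+\phi(x)\sum_k\phi_{x_k}(x)\,d^A_k(x)$, i.e. \eqref{z} with $d^w:=\nabla\phi\cdot\vec d^A\in C^2$.

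For \eqref{dkf} I would use that $\nabla_\Gamma f(q)=(\underline{D}_1f(q),\dots,\underline{D}_{n+1}f(q))\in T_q\Gamma$ is orthogonal to $\nabla\phi(q)$ for $q\in\Gamma$; evaluated at $q=p(x)$ this gives $\sum_k\underline{D}_kf(p(x))\,\phi_{x_k}(p(x))=0$, whence
\[
\sum_{k=1}^{n+1}\underline{D}_kf(p(x))\,\phi_{x_k}(x)=\sum_{k=1}^{n+1}\underline{D}_kf(p(x))\bigl(\phi_{x_k}(x)-\phi_{x_k}(p(x))\bigr)=\phi(x)\sum_{k=1}^{n+1}\underline{D}_kf(p(x))\,h_k(x),
\]
i.e. \eqref{dkf} with $d^f_k:=h_k\in C^2$. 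The point is that the $h_k$ do not involve $f$, so $f\in C^1$ suffices while the $d^f_k$ are nevertheless $C^2$.

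The only nonroutine ingredient is the auxiliary identity $\nabla\phi(x)-\nabla\phi(p(x))=\phi(x)\vec h(x)$ with $\vec h\in C^2$. One cannot obtain it by applying a division lemma to the $C^2$ function $x-p(x)$ (which vanishes on $\Gamma=\{\phi=0\}$), because dividing out $\phi$ costs a derivative and would leave only a $C^1$ cofactor; the remedy is to read the factorisation off the explicit flow representation of $F$, using that \eqref{odesys} upgrades $\partial_sF$ from $C^1$ to $C^2$. Everything downstream of this is elementary.
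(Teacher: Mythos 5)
Your proof is correct and follows essentially the same route as the paper's: both reduce (\ref{aklnu}) to the normalisation $A(p(x))\,\nabla\phi(p(x))=\nabla\phi(p(x))$ plus the factorisation of $\nabla\phi(x)-\nabla\phi(p(x))$ by $\phi(x)$ via the flow $F$, deduce (\ref{z}) directly from (\ref{aklnu}), and obtain (\ref{dkf}) from the tangency of $\nabla_\Gamma f(p(x))$ to $\Gamma$ at $p(x)$. The only (immaterial) difference is that the paper extracts the factor $\phi(x)$ by integrating $\frac{d}{d\tau}\phi_{x_l}(\eta_x(\tau))$ along the curve $\eta_x(\tau)=F(p(x),(1-\tau)\phi(x))$, whose velocity already carries $\phi(x)$ by (\ref{etak}), whereas you first write $x-p(x)=\phi(x)\,\vec g(x)$ and then Taylor-expand $\nabla\phi$ along the chord; your explicit justification of why the cofactors remain $C^2$ (reading the factorisation off the ODE rather than dividing by $\phi$) makes precise what the paper only asserts.
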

\begin{proof}
Recalling that $A(p) \nu(p) = \nu(p), \, p \in \Gamma$ as well as $\eta_x(0)=x, \eta_x(1)=p(x)$ we obtain with the help of (\ref{etak})
\begin{eqnarray}
\lefteqn{ \hspace{-1cm} 
\sum_{l=1}^{n+1} a_{kl}(p(x)) \phi_{x_l}(x)  =   \phi_{x_k}(p(x)) +  \sum_{l=1}^{n+1} a_{kl}(p(x)) \bigl( \phi_{x_l}(x)-  \phi_{x_l}(p(x)) \bigr) }   \nonumber \\
& = & \phi_{x_k}(x) -   \sum_{l=1}^{n+1} \bigl( a_{kl}(p(x)) - \delta_{kl} \bigr) \bigl( \phi_{x_l}(\eta_x(1))-  \phi_{x_l}(\eta_x(0)) \bigr)  \nonumber \\
& = &  \phi_{x_k}(x) + \phi(x) d^A_k(x).
\end{eqnarray}
Note that $d^A_k \in C^2$, since this is true for $x \mapsto \eta_x$ and $x \mapsto a_{kl}(p(x))$.  The relation (\ref{z}) immediately follows from (\ref{aklnu}).
Next, observing that $\nabla_{\Gamma} f(p(x)) \in T_{p(x)} \Gamma$ and $\nabla \phi(p(x)) \perp T_{p(x)} \Gamma$ we infer that
\begin{displaymath}
\sum_{k=1}^{n+1} \underline{D}_k f(p(x)) \, \phi_{x_k}(x)  =  \sum_{k=1}^{n+1} \underline{D}_k f(p(x)) \bigl(  \phi_{x_k}(x) - \phi_{x_k}(p(x)) \bigr),   
\end{displaymath}
which implies  (\ref{dkf}) in a similar way as above. 
\end{proof}

Inserting (\ref{aklnu}) and (\ref{z}) into (\ref{etak}) we infer that there exist  $d^{\eta,1}_k \in C^2$ such that 
\begin{equation}  \label{etak1}
\eta_{x,k}'(0)= - \frac{\phi(x) \phi_{x_k}(x)}{| \nabla \phi(x) |^2} + \phi(x)^2 d^{\eta,1}_k(x), \quad 1 \leq k \leq n+1.
\end{equation}
If we differentiate (\ref{etak}) and use again (\ref{etak})  we obtain
\begin{eqnarray}
\lefteqn{
\eta_{x,k}''(\tau) = - \frac{\phi(x)}{z_x(\tau)} \sum_{l,m=1}^{n+1} a_{kl}(p(x)) \phi_{x_l x_m}(\eta_x(\tau)) \eta_{x,m}'(\tau) + \frac{\phi(x)}{z_x(\tau)^2} z_x'(\tau) \sum_{l=1}^{n+1} a_{kl}(p(x)) \phi_{x_l}(\eta_x(\tau)) } \nonumber \\
& = &  \frac{\phi(x)^2}{z_x(\tau)^2} \sum_{l,m,q=1}^{n+1} a_{kl}(p(x)) a_{mq}(p(x)) \phi_{x_l x_m}(\eta_x(\tau)) \phi_{x_q}(\eta_x(\tau)) \label{etak2}  \\
& & -2 \frac{\phi(x)^2}{z_x(\tau)^2} \sum_{l,m,q,r,s=1}^{n+1} a_{kl}(p(x)) a_{mq}(p(x)) a_{rs}(p(x)) \phi_{x_r x_m}(\eta_x(\tau)) \phi_{x_l}(\eta_x(\tau))\phi_{x_q}(\eta_x(\tau)) \phi_{x_s}(\eta_x(\tau)). \nonumber 
\end{eqnarray}
Taylor's theorem together with (\ref{etak1}) and (\ref{etak2}) implies the existence of $d^{p,0}_k \in C^2$ with
\begin{eqnarray} 
p_k(x) & = &  \eta_{x,k}(1) = \eta_{x,k}(0) + \eta_{x,k}'(0) + \int_0^1 (1 - \tau) \eta_{x,k}''(\tau) d \tau \nonumber \\ 
& = &  x_k - \frac{\phi(x) \phi_{x_k}(x)}{| \nabla \phi(x) |^2}  + \phi(x)^2 d^{p,0}_k(x). \label{p1} 
\end{eqnarray}
The relation (\ref{p1}) allows us to prove a bound between $p(x)$ and the closest-point projection $\hat{p}(x)$,
which is used in the error analysis. 

\begin{lem} There exists a constant $C$ such that
\begin{equation}  \label{pdif}
| p(x) - \hat{p}(x) | \leq C\, \phi(x)^2 \qquad \forall \ x \in U_{\delta}.
\end{equation}
\end{lem}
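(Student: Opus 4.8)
The plan is to compare the expansion (\ref{p1}) for $p$ with the explicit formula $\hat{p}(x) = x - d(x)\,\nabla d(x)$ for the closest--point projection. Subtracting the two and using that the coefficients $d^{p,0}_k$ in (\ref{p1}) lie in $C^2$ (hence are bounded on $\overline{U_\delta}$), one is left with
\[
p(x) - \hat{p}(x) = d(x)\,\nabla d(x) - \frac{\phi(x)\,\nabla\phi(x)}{|\nabla\phi(x)|^2} + O\bigl(\phi(x)^2\bigr),
\]
so the task reduces to showing that the difference of the first two terms is $O(\phi(x)^2)$. I would first note that it suffices to treat $x$ in a neighbourhood of $\Gamma$, since for $x$ with $|\phi(x)|$ bounded away from $0$ the left--hand side is bounded ($p(x),\hat{p}(x)\in\Gamma$) and the inequality holds there after enlarging $C$.

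Next, for such an $x$ I would set $q := \hat{p}(x)\in\Gamma$ and $t := d(x)$, so that (by the properties of $d$ recorded in Section~\ref{sec:2}) $\nabla d(x) = \nabla d(q) = \nu(q) =: \nu$ and $x = q + t\,\nu$. Because $\Gamma = \{\phi = 0\}$ with $\nabla\phi\neq 0$ on $\Gamma$, the gradient $\nabla\phi(q)$ is normal to $\Gamma$ at $q$, i.e. $\nabla\phi(q) = \lambda\,\nu$ with $\lambda := \nabla\phi(q)\cdot\nu$ and $c_0 \le |\lambda| \le c_1$ by (\ref{gradbounds}). A first--order Taylor expansion of the smooth functions $\phi$, $\nabla\phi$ and $|\nabla\phi|^2$ about $q$, evaluated at $x = q + t\nu$, then gives, with constants depending only on $\phi$ and $\Omega$,
\[
\phi(x) = \lambda\,t + O(t^2), \qquad \nabla\phi(x) = \lambda\,\nu + O(t), \qquad |\nabla\phi(x)|^2 = \lambda^2 + O(t).
\]
In particular $|\phi(x)| \ge |\lambda|\,|t| - O(t^2) \ge \frac{c_0}{2}\,|t|$ for $|t|$ small, so a quantity that is $O(t^2)$ is also $O(\phi(x)^2)$ near $\Gamma$.

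Finally I would substitute these expansions into the quotient: the numerator is $\phi(x)\,\nabla\phi(x) = \lambda^2\,t\,\nu + O(t^2)$, and since $|\nabla\phi(x)|^2 \ge c_0^2$ everywhere by (\ref{gradbounds}) and $|\nabla\phi(x)|^2 = \lambda^2 + O(t)$,
\[
\frac{\phi(x)\,\nabla\phi(x)}{|\nabla\phi(x)|^2} = \frac{\lambda^2\,t}{|\nabla\phi(x)|^2}\,\nu + O(t^2) = t\bigl(1 + O(t)\bigr)\,\nu + O(t^2) = t\,\nu + O(t^2) = d(x)\,\nabla d(x) + O\bigl(d(x)^2\bigr),
\]
and inserting this into the displayed identity together with $O(d(x)^2) = O(\phi(x)^2)$ would yield (\ref{pdif}). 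The main obstacle is precisely this last step: one must verify that the part of $\phi\,\nabla\phi/|\nabla\phi|^2$ linear in $t = d(x)$ reproduces $d(x)\,\nabla d(x)$ \emph{exactly} --- a cancellation that relies both on $\nabla\phi$ being normal to $\Gamma$ along $\Gamma$ (so $\nabla\phi(q) = \lambda\nu$, which collapses the quotient to $t\nu$ at leading order) and on the uniform lower bound $|\nabla\phi|\ge c_0$ from (\ref{gradbounds}) keeping the denominator away from zero; the remaining estimates are routine.
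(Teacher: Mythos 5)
Your proposal is correct and follows essentially the same route as the paper: both start from the expansion (\ref{p1}) for $p(x)$ and reduce the claim to showing that $\phi(x)\,\nabla\phi(x)/|\nabla\phi(x)|^2$ agrees with $x-\hat p(x)$ up to $O(\phi(x)^2)$, using a Taylor expansion of $\phi$ about $\hat p(x)$ together with the fact that $\nabla\phi(\hat p(x))$ is parallel to $\nu(\hat p(x))$. The only cosmetic difference is that the paper parametrizes $x-\hat p(x)=\lambda\,\nabla\phi(\hat p(x))$ via (\ref{op}) and solves for $\lambda$, whereas you write $x-\hat p(x)=d(x)\,\nabla d(x)$ and expand the quotient directly in $t=d(x)$; the estimates are the same.
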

\begin{proof} Let us fix $x \in U_\delta$. Using (\ref{p1}) and the fact that $p(x) \in \Gamma$ we have
\begin{displaymath}
| x - \hat p(x) | \leq | x - p(x) | \leq C | \phi(x) |.
\end{displaymath}
Furthermore, since $T_{\hat p(x)} \Gamma = \mbox{span} \lbrace \nabla \phi(\hat p(x)) \rbrace^\perp$, (\ref{op}) implies that there exists $\lambda \in \mathbb{R}$ such that
$x - \hat{p}(x) = \lambda \, 
\nabla \phi(\hat{p}(x))$. Taylor expansion around $\hat p(x)$ yields together with $\phi(\hat p(x))=0$,  that
\begin{eqnarray*}
\phi(x) & = &  \nabla \phi(\hat p(x)) \cdot (x- \hat p(x)) + \frac{1}{2} (x - \hat p(x))^t  D^2 \phi(\xi) (x - \hat p(x)) \\
& = &  \lambda \, | \nabla \phi(\hat p(x)) |^2 + \frac{1}{2} (x - \hat p(x))^t  D^2 \phi(\xi) (x - \hat p(x))
\end{eqnarray*}
for some $\xi \in [\hat p(x),x]$. Thus
\begin{displaymath}
\lambda = \frac{\phi(x)}{| \nabla \phi(\hat{p}(x))|^2} +  r, \quad \mbox{ where } | r | \leq C | x - \hat p(x) |^2 \leq C \phi(x)^2
\end{displaymath}
and therefore
\begin{equation}  \label{hatp}
 x - \hat{p}(x) =  \phi(x) \,\frac{\nabla \phi(\hat{p}(x))}
 {| \nabla \phi(\hat{p}(x)) |^2} + r \,\nabla \phi(\hat{p}(x)).
\end{equation}
If we combine this relation with (\ref{p1}) we find that
\begin{displaymath}
p(x) - \hat{p}(x) = \phi(x) \left[ \frac{\nabla \phi(\hat{p}(x))}
{| \nabla \phi(\hat{p}(x)) |^2} - \frac{ \nabla \phi(x)}{ | \nabla \phi(x) |^2} \right]
+ r \nabla \phi(\hat p(x)) + \phi(x)^2  \, d^{p,0}(x),
\end{displaymath}
from which we deduce (\ref{pdif}), since $| x - \hat{p}(x)| \leq C \,| \phi(x) |$ and $|r| \leq C \phi(x)^2$.
\end{proof}

Our next aim is to improve on (\ref{p1}) by using a Taylor expansion of one degree higher. We deduce from (\ref{etak2}), (\ref{aklnu}) and (\ref{z}) that
\begin{eqnarray}
\eta_{x,k}''(0) & = &  \frac{\phi(x)^2}{w(x)^2} \sum_{l,m,q=1}^{n+1} a_{kl}(p(x)) a_{mq}(p(x)) \phi_{x_l x_m}(x) \phi_{x_q}(x) \nonumber  \\
& & -2 \frac{\phi(x)^2}{w(x)^2} \sum_{l,m,q,r,s=1}^{n+1} a_{kl}(p(x)) a_{mq}(p(x)) a_{rs}(p(x)) \phi_{x_r x_m}(x) \phi_{x_l}(x)\phi_{x_q}(x) \phi_{x_s}(x) \nonumber \\
& = & \frac{\phi(x)^2}{| \nabla \phi(x) |^4} \sum_{l,m=1}^{n+1} a_{kl}(p(x))  \phi_{x_l x_m}(x) \phi_{x_m}(x) \nonumber   \\
& & -2 \frac{\phi(x)^2 \phi_{x_k}(x)}{| \nabla \phi(x) |^4} \sum_{m,r=1}^{n+1}  \phi_{x_r}(x) \phi_{x_m}(x) \phi_{x_r x_m}(x) + \phi(x)^3 \, d^{\eta,2}_k(x), \label{etak3}
\end{eqnarray}
where $d^{\eta,2}_k \in C^2$. Differentiating (\ref{etak2}) and using (\ref{etak}) as well as (\ref{etak3}) we obtain   
\begin{eqnarray} 
p_k(x) & = &  \eta_{x,k}(1) = \eta_{x,k}(0) + \eta_{x,k}'(0) + \frac{1}{2} \, \eta_{x,k}''(0) + \frac{1}{2}  \int_0^1 (1 - \tau)^2 \eta_{x,k}'''(\tau) d \tau \nonumber \\ 
& = &  x_k - \frac{\phi(x)}{w(x)} \,\sum_{l=1}^{n+1} a_{kl}(p(x)) \,\phi_{x_l}(x)  
+ \frac{1}{2} \frac{\phi(x)^2}{| \nabla \phi(x) |^4} \sum_{l,m=1}^{m+1} a_{kl}(p(x))  \phi_{x_l x_m}(x) \phi_{x_m}(x) \nonumber \\
& & - \frac{\phi(x)^2 \phi_{x_k}(x)}{| \nabla \phi(x) |^4} \sum_{m,r=1}^{n+1} \phi_{x_r}(x) \phi_{x_m}(x) \phi_{x_r x_m}(x) + \phi(x)^3 \, \tilde d^{p,0}_k(x),  \label{pk}
\end{eqnarray}
where $\tilde d^{p,0}_k \in C^2$. \\
Before we continue let us remark
that we may deduce from (\ref{p1}) 
\begin{equation} \label{p2}
p_{k,x_i}(x)  =    \delta_{ik} - \frac{\phi_{x_i}(x) \phi_{x_k}(x)}{| \nabla \phi(x) |^2}  + \phi(x) d^{p,1}_{ik}(x),  \quad 1 \leq i,k \leq n+1,
\end{equation}
where $d^{p,1}_{ik} \in C^1$. Combining this relation with  (\ref{dkf}) we obtain
\begin{eqnarray} \label{akldif}
\lefteqn{ 
\frac{\partial}{\partial x_i} [ a_{kl}(p(x))] = \sum_{m=1}^{n+1} \underline{D}_m a_{kl}(p(x)) p_{m,x_i}(x) } \nonumber \\
& = &  \underline{D}_i a_{kl}(p(x)) - \sum_{m=1}^{n+1} \underline{D}_m a_{kl}(p(x)) \frac{ \phi_{x_i}(x) \phi_{x_m}(x)}{| \nabla \phi(x) |^2} + \phi(x)  \sum_{m=1}^{n+1} \underline{D}_m a_{kl}(p(x)) d^{p,1}_{im}(x) 
\nonumber \\
& = & \underline{D}_i a_{kl}(p(x)) + \phi(x) d^{A,i}_{kl}(x),  \label{diakl}
\end{eqnarray}
where $d^{A,i}_{kl} \in C^1$. 
Differentiating (\ref{pk}) with respect to
$x_i$ and  using (\ref{akldif}), (\ref{z}) we deduce for $1 \leq i,k \leq n+1$
\begin{eqnarray}
p_{k,x_i}(x)  &= &  \delta_{ik} -  \frac{\phi_{x_i}(x)}{w(x)} 
\sum_{l=1}^{n+1} a_{kl}(p(x)) \phi_{x_l}(x) + \phi(x) \frac{\phi_{x_k}(x) w_{x_i}(x)}{| \nabla \phi(x) |^4} - \phi(x) \sum_{l=1}^{n+1} \frac{\underline{D}_i a_{kl}(p(x)) \phi_{x_l}(x)}{| \nabla \phi(x) |^2} \nn  \\
& &  - \phi(x) \sum_{l=1}^{n+1}   \frac{a_{kl}(p(x)) 
\phi_{x_l x_i}(x)}{| \nabla \phi(x) |^2}  + \frac{\phi(x) \phi_{x_i}(x)}{| \nabla \phi(x) |^4} \sum_{l,m=1}^{m+1} a_{kl}(p(x))  \phi_{x_l x_m}(x) \phi_{x_m}(x) \nonumber \\
& & - 2 \frac{\phi(x) \phi_{x_i}(x) \phi_{x_k}(x)}{| \nabla \phi(x) |^4} \sum_{m,r=1}^{n+1} \phi_{x_r}(x) \phi_{x_m}(x) \phi_{x_r x_m}(x) + \phi(x)^2 \tilde d^{p,1}_{ik}(x), \label{pki}
\end{eqnarray}
where $\tilde d^{p,1}_{ik} \in C^1$. 
If we  differentiate this relation with respect to $x_j$ and use (\ref{aklnu}), (\ref{diakl}) we infer for $1 \leq i,j,k \leq n+1$
\begin{eqnarray}
\lefteqn{ \hspace{-1cm}
p_{k,x_i x_j}(x)  =  - \sum_{l=1}^{n+1} \left\{  \frac{\underline{D}_j a_{kl}(p(x)) \phi_{x_i}(x) \phi_{x_l}(x)}{| \nabla \phi(x) |^2} + \frac{ \underline{D}_i a_{kl}(p(x)) \phi_{x_j}(x) \phi_{x_l}(x)}{| \nabla \phi(x) |^2} \right\} }  \nn \\
& & - \sum_{l=1}^{n+1} \left\{  \frac{a_{kl}(p(x)) \phi_{x_i}(x) \phi_{x_l x_j}(x)}{
| \nabla \phi(x) |^2} + \frac{a_{kl}(p(x)) \phi_{x_j}(x) \phi_{x_l x_i}(x)}{
| \nabla \phi(x) |^2} \right\}  \nn \\
& &  + \sum_{l,m=1}^{n+1} \frac{a_{kl}(p(x)) \phi_{x_i}(x) \phi_{x_j}(x) 
\phi_{x_m}(x) \phi_{x_l x_m}(x)}{| \nabla \phi(x) |^4} \nn \\
& & + \beta_{ijk}(x) \phi_{x_k}(x) +
\phi(x) \, \tilde d^{p,2}_{ijk}(x) + \phi(x)^2 \, \tilde d^{p,3}_{ijk}(x), \label{pk2}
\end{eqnarray}
where $\beta_{ijk}, \tilde d^{p,2}_{ij} \in C^1, \tilde d^{p,3}_{ijk} \in C^0$.  Using the above formulae we now obtain:
\begin{lem} \label{extde}
Suppose that $u:\Gamma \rightarrow \mathbb{R}$ is a solution of (\ref{GP}). Then, $u^e$ satisfies (\ref{EP}), (\ref{rest}).
\end{lem}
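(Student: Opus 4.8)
The plan is to recast (\ref{EP}) as a pointwise differential identity and verify it by a direct, if lengthy, computation built on the expansions of $p$ and its derivatives obtained above. A preliminary observation is that the zeroth-order term drops out: writing $f^e = \bigl(-\sum_{i,j}\underline{D}_i(a_{ij}\underline{D}_j u)+a_0u\bigr)\circ p$ and $a_0^e u^e = (a_0 u)\circ p$, the assertion (\ref{EP})--(\ref{rest}) is equivalent to
\begin{displaymath}
\frac{1}{|\nabla\phi|}\,\nabla\cdot\bigl(A^e\,\nabla u^e\,|\nabla\phi|\bigr) = \left(\sum_{i,j=1}^{n+1}\underline{D}_i(a_{ij}\,\underline{D}_j u)\right)\circ p \;-\; \phi\,R ,
\end{displaymath}
with $R$ of the form (\ref{rest}); in particular $u$ need not solve (\ref{GP}), that hypothesis serving only to name $f^e$. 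Since both sides are bounded linear functions of $u$ from $H^2(\Gamma)$ into $L^2(U_\delta)$ (by (\ref{uexi}), (\ref{uexixj}) and (\ref{geest}), using $A\in C^2(\Gamma)$ and $p\in C^2$), it suffices to prove this for $u\in C^\infty(\Gamma)$, the general case following by density.

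Fixing $x\in U_\delta$ and writing $p=p(x)$, the computation would proceed in three steps. (i) Inserting (\ref{p2}) into (\ref{uexi}) and using (\ref{dkf}) with $f=u$ to absorb the term $\sum_k\underline{D}_k u(p)\,\phi_{x_k}$, which carries a factor $\phi$, gives $u^e_{x_i}(x)=\underline{D}_i u(p)+\phi(x)\sum_k\underline{D}_k u(p)\,g^{(1)}_{ik}(x)$ with $g^{(1)}_{ik}\in C^1(U_\delta)$. (ii) Writing $p_{k,x_i}=P_{ik}+\phi\,d^{p,1}_{ik}$ with the smooth projector $P_{ik}:=\delta_{ik}-\phi_{x_i}\phi_{x_k}/|\nabla\phi|^2$, substituting this together with (\ref{pk2}) into (\ref{uexixj}) and sorting by powers of $\phi$ yields $u^e_{x_i x_j}(x)=H_{ij}(x)+\phi(x)\sum_{1\le|\kappa|\le2}g^{(2)}_{ij,\kappa}(x)\,D^\kappa_\Gamma u(p)+\phi(x)^2\sum_{1\le|\kappa|\le2}h^{(2)}_{ij,\kappa}(x)\,D^\kappa_\Gamma u(p)$, where $H_{ij}$ gathers the $\phi$-independent (``intrinsic'') part, $g^{(2)}_{ij,\kappa}\in C^1$ (built from $P$, $d^{p,1}$, $\beta_{ijk}$, $\tilde d^{p,2}_{ijk}$ and derivatives of $\phi$) and $h^{(2)}_{ij,\kappa}\in C^0$ (from $\tilde d^{p,3}_{ijk}$). (iii) Expanding $\partial_{x_i}(a_{ij}^e\,u^e_{x_j}\,|\nabla\phi|)$ by the product rule, using (\ref{diakl}) for $\partial_{x_i}a_{ij}^e$ and inserting (i)--(ii), one splits the result into a $\phi$-independent part and a remainder proportional to $\phi$; after dividing by $|\nabla\phi|$ the remainder is recognised as $\phi R$ with $R$ as in (\ref{rest}), its $\phi$-free coefficients $b_\kappa$ being $C^1$ because they are built from the $C^1$ quantities $d^{p,1}$, $d^{A,i}_{kl}$, $\beta_{ijk}$, $\tilde d^{p,2}_{ijk}$, $g^{(1)}$, $P$, $|\nabla\phi|^{\pm1}$ and (for $|\kappa|=1$) $\underline{D}_i a_{ij}(p)$ with $a_{ij}\in C^2$, while the $c_\kappa$ collect the terms carrying $\tilde d^{p,3}_{ijk}\in C^0$.

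The main obstacle is the remaining identification: that the $\phi$-independent part of $|\nabla\phi|^{-1}\nabla\cdot(A^e\nabla u^e|\nabla\phi|)$ equals precisely $\bigl(\sum_{i,j}\underline{D}_i(a_{ij}\underline{D}_j u)\bigr)(p)$, with no surviving normal-derivative contribution and no spurious zeroth-order term. Since $\phi=0$ on $\Gamma$ this reduces to a purely intrinsic identity on $\Gamma$, where $p(x)=x$, $P_{ik}=\delta_{ik}-\nu_i\nu_k$, $\nabla\phi/|\nabla\phi|=\nu$, and (\ref{aklnu}) becomes $A\nu=\nu$. The crucial input here is the orthogonality relation (\ref{orth}): it forces $A^e\nabla u^e$ to have vanishing component along $\nabla\phi$ throughout $U_\delta$ (not merely on $\Gamma$), which is exactly what annihilates the normal derivative of the normal component of $A^e\nabla u^e$ when the divergence is taken; together with $A\nu=\nu$, the assumption (\ref{comp}) that $A$ maps $T_x\Gamma$ into itself, and the curvature terms in the first three lines of (\ref{pk2}), the $\phi$-independent pieces then recombine into $|\nabla\phi|\sum_{i,j}\underline{D}_i(a_{ij}\underline{D}_j u)(p)$ exactly as in \cite[Section 2.1]{DER14} for the case $A=I$. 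Carrying out this recombination --- and keeping track at each stage of which remainder functions are $C^1$ and which only $C^0$ --- is the bulk of the work but is routine once the structure above is in place; substituting it back and invoking (\ref{GP}) then yields (\ref{EP})--(\ref{rest}).
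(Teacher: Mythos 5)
Your proposal follows essentially the same route as the paper's proof: expanding $u^e_{x_i}$ and $u^e_{x_i x_j}$ via (\ref{p2}), (\ref{pk2}) and (\ref{dkf}), contracting with $a^e_{ij}$, adding the $a^e_{ij,x_j}$ and $|\nabla\phi|$-derivative terms, and sorting by powers of $\phi$, exactly as in (\ref{p2a}), (\ref{p2b}) and (\ref{aij1})--(\ref{aijxj}). The one step you defer as routine --- verifying that the $\phi$-independent part recombines into $\sum_{i,j}\underline{D}_j\bigl(a_{ij}\,\underline{D}_i u\bigr)\circ p$ --- is precisely the explicit cancellation the paper carries out, and your identification of the ingredients (the tangentiality of $A^e\nabla u^e$, $A\nu=\nu$, and the curvature terms of (\ref{pk2})) is correct, so the argument is sound.
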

\begin{proof} Combining (\ref{uexi}), (\ref{p2}) and (\ref{dkf}) we deduce that
\begin{equation} \label{p2a}
u^e_{x_i}(x)  =  \sum_{k=1}^{n+1} \underline{D}_k u(p(x))\, p_{k,x_i}(x)  =  \underline{D}_i u(p(x)) + \phi(x) \sum_{k=1}^{n+1} \alpha^i_k(x) \underline{D}_k u(p(x)),
\end{equation}
where $\alpha^i_k \in C^1$.
Similarly, using (\ref{uexixj}), (\ref{p2}),  (\ref{pk2}) and (\ref{dkf}) we obtain
\begin{eqnarray}
u^e_{x_i x_j}(x) & = &  \sum_{k,l=1}^{n+1} \underline{D}_l \underline{D}_k u(p(x)) p_{k,x_i}(x) p_{l,x_j}(x) + \sum_{k=1}^{n+1} \underline{D}_k u(p(x)) p_{k,x_i x_j}(x) \nonumber  \\
& = & \sum_{k=1}^{n+1} \underline{D}_j \underline{D}_k u(p(x)) \bigl( \delta_{ik} - \frac{\phi_{x_i}(x) \phi_{x_k}(x)}{| \nabla \phi(x) |^2} \bigr) \nonumber \\
& & - \sum_{k,l=1}^{n+1} \underline{D}_k u(p(x))  \left\{  \frac{\underline{D}_j a_{kl}(p(x)) \phi_{x_i}(x) \phi_{x_l}(x)}{| \nabla \phi(x) |^2} + \frac{ \underline{D}_i a_{kl}(p(x)) \phi_{x_j}(x) \phi_{x_l}(x)}{| \nabla \phi(x) |^2} \right\} 
\nonumber \\
& & - \sum_{k,l=1}^{n+1} \underline{D}_k u(p(x)) \left\{  \frac{a_{kl}(p(x)) \phi_{x_i}(x) \phi_{x_l x_j}(x)}{
| \nabla \phi(x) |^2} + \frac{a_{kl}(p(x)) \phi_{x_j}(x) \phi_{x_l x_i}(x)}{| \nabla \phi(x) |^2} \right\} \nonumber \\
& & + \sum_{k,l,m=1}^{n+1} \underline{D}_k u(p(x))  \frac{a_{kl}(p(x)) \phi_{x_i}(x) \phi_{x_j}(x) 
\phi_{x_m}(x) \phi_{x_l x_m}(x)}{| \nabla \phi(x) |^4} \nonumber \\
& & + \sum_{1 \leq | \kappa | \leq 2} \bigl( \phi(x) \alpha^{ij}_{\kappa}(x) + \phi(x)^2 \tilde \alpha^{ij}_{\kappa}(x) \bigr) D_{\Gamma}^{\kappa} u(p(x)), \label{p2b}
\end{eqnarray}
where $\alpha^{ij}_\kappa \in C^1, \tilde \alpha^{ij}_\kappa \in C^0$.
Recalling (\ref{aklnu}) and using (\ref{p2b}) and  the symmetry of the coefficients $a_{ij}$ we infer that
\begin{eqnarray}
\lefteqn{
\sum_{i,j=1}^{n+1} a_{ij}^e(x) \, u^e_{x_i x_j}(x) = \sum_{i,j=1}^{n+1} a_{ij}(p(x)) \, u^e_{x_i x_j}(x) } \nonumber \\
& = & \sum_{i,j=1}^{n+1} a_{ij}(p(x)) \underline{D}_j \underline{D}_i u(p(x)) - \sum_{j,k=1}^{n+1} \underline{D}_j \underline{D}_k u(p(x)) \, \frac{\phi_{x_j}(x) \phi_{x_k}(x)}{| \nabla \phi(x) |^2} \nonumber  \\
& & - 2 \sum_{j,k,l=1}^{n+1} \underline{D}_k u(p(x)) \underline{D}_j a_{kl}(p(x)) \frac{\phi_{x_j}(x) \phi_{x_l}(x)}{ | \nabla \phi(x) |^2} 
-\sum_{k,l,m=1}^{n+1} a_{kl}(p(x)) \underline{D}_k u(p(x)) \frac{\phi_{x_m}(x) \phi_{x_l x_m}(x)}{ | \nabla \phi(x) |^2} \nonumber \\
& & + \sum_{1 \leq | \kappa | \leq 2} \bigl( \phi(x) \beta_{\kappa}(x) + \phi(x)^2 \tilde \beta_{\kappa}(x) \bigr) D_{\Gamma}^{\kappa} u(p(x)) \nonumber \\
& = & \sum_{i,j=1}^{n+1} a_{ij}(p(x)) \underline{D}_j \underline{D}_i u(p(x))
-\sum_{k,l,m=1}^{n+1} a_{kl}(p(x)) \underline{D}_k u(p(x)) \frac{\phi_{x_m}(x) \phi_{x_l x_m}(x)}{ | \nabla \phi(x) |^2} \nonumber \\
& & + \sum_{1 \leq | \kappa | \leq 2} \bigl( \phi(x) \gamma_{\kappa}(x) + \phi(x)^2 \tilde \gamma_{\kappa}(x) \bigr) D_{\Gamma}^{\kappa} u(p(x)),
\label{aij1}
\end{eqnarray}
where  the last identity follows from (\ref{dkf}) and where $\gamma_\kappa \in C^1, \tilde \gamma_\kappa \in C^0$.
On the other hand, (\ref{akldif}) and (\ref{p2a}) yield 
\begin{equation} \label{aijxj}
\sum_{i,j=1}^{n+1} a^e_{ij,x_j}(x) u^e_{x_i}(x)= \sum_{i,j=1}^{n+1} \underline{D}_j a_{ij}(p(x)) \, \underline{D}_i u(p(x)) + \phi(x) \sum_{k=1}^{n+1} \tilde \beta_k(x) \underline{D}_k u(p(x)),
\end{equation}
where $\tilde \beta_k \in C^1$.
Combining (\ref{aij1}) and (\ref{aijxj}) we find that
\begin{eqnarray*}
\lefteqn{
\frac{1}{| \nabla \phi(x) |} \nabla \cdot \left( A^e(x) \nabla u^e(x) | \nabla \phi(x)| \right) }  \\
&   = &   \sum_{i,j=1}^{n+1} \bigl( a^e_{ij,x_j}(x) u^e_{x_i}(x) + a^e_{ij}(x) 
u^e_{x_i x_j}(x) \bigr)  + \frac{1}{| \nabla \phi(x) |^2} 
\sum_{i,j,k=1}^{n+1}  a^e_{ij}(x) u^e_{x_i}(x) \phi_{x_k}(x) \phi_{x_k x_j}(x) \\
& = &  \sum_{i,j=1}^{n+1} \underline{D}_j \bigl( a_{ij}(p(x)) 
\underline{D}_i u(p(x)) \bigr) +  \sum_{1 \leq | \kappa | \leq 2} \bigl( \phi(x) b_{\kappa}(x) + \phi(x)^2 c_\kappa(x) \bigr) D_{\Gamma}^{\kappa} u(p(x)),
\end{eqnarray*}
where $b_\kappa \in C^1, c_\kappa \in C^0$. Combining this relation  with (\ref{GP}) implies (\ref{EP}) and  (\ref{rest}). 
\end{proof}


\end{appendix}

\bibliographystyle{plain}


\begin{thebibliography}{10}
\bibitem{BNS} Barrett, J.W and N\"urnberg, R. and Styles, V.: {\it Finite element approximation of a phase field model for void electromigration}. SIAM J. Numer. Anal. {\bf 46}, 738--772 (2004).
\bibitem{BCOS} Bertalmio, M., Cheng, L.T., Osher, S., Sapiro, G.: {\it Variational problems and partial differential equations on implicit surfaces: The framework and
examples in image processing and pattern formation.} J. Comput. Phys. {\bf 174}, 759--780 (2001).
\bibitem{BDN} Bonito, A., Demlow, A., Nochetto, R.H.: {\it Finite element methods for the Laplace-Beltrami operator}. 
Handbook of Numerical Analysis, vol. XXI, Geometric Partial Differential Equations - Part 1 (2020).
\bibitem{Bu09} Burger, M: {\it Finite element approximation of elliptic 
partial differential equations on implicit surfaces.} Comput. Vis. Sci. 
{\bf 12}, 87--100 (2009).
\bibitem{BHL15} Burman, E., Hansbo, P., Larson, M.G.: {\it A stabilized cut finite element method for partial differential equations on surfaces: The Laplace--Beltrami operator.}
 Comput. Methods Appl. Mech. Engrg. {\bf 285}, 188-–207 (2015).
 \bibitem{BHLMZ} Burman, E., Hansbo, P., Larson, M.G., Massing, A., Zahedi, S.: {\it Full gradient stabilized cut finite element methods for surface partial differential equations.}
 Comput. Methods Appl. Mech. Engrg. {\bf 310}, 278-–296 (2016).
\bibitem{CO13} Chernyshenko, A.Y,  Olshanskii, M.A.: {\it  Non-degenerate Eulerian finite element method for solving PDEs on surfaces.} Russian J. Numer. Anal. Math. Modelling {\bf 28}, no. 2,
101-–124 (2013).
\bibitem{DDEH} Deckelnick, K.,  Dziuk, G., Elliott, C.M., Heine, C.-J.: {\it An h-narrow band finite-element method for elliptic equations on implicit surfaces.}
IMA J. Numer. Anal. {\bf 30}, 351--376 (2010).
\bibitem{DER14} Deckelnick, K., Elliott, C.M., Ranner, T.: 
{\it Unfitted finite element methods using bulk meshes fur surface partial differential equations.} 
SIAM J. Numer. Anal. {\bf 52}, 2137--2162 (2014).
\bibitem{DS} Deckelnick, K., Styles, V.: {\it Stability and error analysis for a diffuse interface approach to an advection-diffusion equation on a moving surface.} Numer. Math. {\bf 139}, 709--741 (2018).
\bibitem{De09} Demlow, A.: {\it Higher-order finite element methods and pointwise error estimates for elliptic problems on surfaces.} SIAM J. Numer. Anal. {\bf 47}, no. 2, 805-–827 (2009). 
\bibitem{DD07} Demlow, A., Dziuk, G.: {\it An adaptive finite element method for the Laplace--Beltrami operator on implicitly defined surfaces.} SIAM J. Numer. Anal. {\bf 45}, 421--442 (2007).
\bibitem{DO12}  Demlow, A.,  Olshanskii, M.A.: {\it  An adaptive surface finite element method based on volume meshes.} SIAM J. Numer. Anal. {\bf 50}, no. 3, 1624–-1647 (2012). 
\bibitem{Dz88}  Dziuk, G.: {\it Finite elements for the Beltrami operator on arbitrary surfaces.} In: Partial differential equations and calculus of variations, 
S. Hildebrandt and R. Leis, eds., vol. 1357 of Lecture Notes in Mathematics, Springer-Verlag, Berlin, 1988, pp. 142--155.
\bibitem{DzE13}  Dziuk, G., Elliott, C.M.: {\it Finite element methods for surface PDEs.} Acta Numer. {\bf 22}, 289--396 (2013). 
\bibitem{ES09}  Elliott, C.M.,  Stinner, B.:  {\it Analysis of a diffuse interface approach to an advection diffusion equation on a moving surface.} Math. Models Methods Appl. Sci. {\bf 19},  
787--802 (2009).
\bibitem{ESSW11}  Elliott, C.M., Stinner, B., Styles, V., Welford, R.: {\it Numerical computation of advection and diffusion on evolving diffuse interfaces.} IMA J. Numer. Anal. {\bf 31}, 
786--812 (2011).
\bibitem{ER13} Elliott, C.M., Ranner, T.: {\it Finite element analysis for a coupled bulk--surface partial differential equation.} IMA J. Numer. Anal. {\bf 33}, 377--402 (2013).
\bibitem{GT} Gilbarg, D., Trudinger, N.S.: {\it Elliptic Partial Differential Equations of Second Order}, Springer--Verlag, Berlin, 2nd ed., 1988.
\bibitem{GLR} Grande, J.,  Lehrenfeld, C., Reusken, A.: {\it  Analysis of a high-order trace finite element method for PDEs on level set surfaces.} SIAM J. Numer. Anal. {\bf 56}, 228--255 (2018).
\bibitem{HH1} Hansbo, A., Hansbo, P.: {\it An unfitted finite element method, based on Nitsche's method, for elliptic interface problems.} Comput. Methods Appl. Mech. Engrg. {\bf 191}, 5537--5552 (2002).
\bibitem{HH2} Hansbo, A., Hansbo, P.: {\it A finite element method for the simulation of strong and weak discontinuities in solid mechanics.} Comput. Methods Appl. Mech. Engrg. {\bf 193}, 3523--3540 (2004).
\bibitem{MR} Macdonald, C.B.,  Ruuth, S.J.: {\it The Implicit Closest Point Method for the Numerical Solution of Partial Differential Equations on Surfaces.} SIAM J. Sci. Comput. {\bf 31}, 4330-–4350 (2009).
\bibitem{OR10}  Olshanskii, M.A., Reusken, A.: {\it  A finite element method for surface PDEs: matrix properties.} Numer. Math. {\bf 114}, no. 3, 491–-520 (2010). 
\bibitem{ORG09}  Olshanskii, M.A., Reusken, A.,  Grande, J.: {\it  A finite element method for elliptic equations on surfaces.} SIAM J. Numer. Anal. {\bf 47}, no. 5, 3339-–3358 (2009). 
\bibitem{OS16}  Olshanskii, M.A.,  Safin, D.: {\it A narrow-band unfitted finite element method for elliptic PDEs posed on surfaces.} Math. Comp. {\bf 85}, no. 300, 1549--1570 (2016).
\bibitem{RV} R\"atz, A.,  Voigt, A.: {\it  PDE's on surfaces - a diffuse interface approach.} Comm. Math. Sci. {\bf 4}, 575--590 (2006).
\bibitem{R15}  Reusken, A.: {\it  Analysis of trace finite element methods for surface partial differential equations.}  IMA J. Numer. Anal. {\bf 35}, no. 4, 1568–-1590 (2015). 
\bibitem{alberta}  Schmidt, A. and Siebert, K.G.: 
{\it Design of adaptive finite element software. The finite element toolbox ALBERTA}.  
Lecture Notes in Computational Science and Engineering 42,
Springer-Verlag, Berlin, (2005). 
\end{thebibliography}

\end{document}